\newtheorem{theorem}{Theorem}[section]
\newtheorem{lemma}[theorem]{Lemma}
\theoremstyle{definition}
\newtheorem{definition}[theorem]{Definition}
\newcommand{\N}{\mathbb{N}}
\newcommand{\C}{\mathbb{C}}
\newcommand{\2}{\C^2}
\newcommand{\3}{\mathbb{S}^3}
\newcommand{\conj}[1]{\overline{#1}}
\renewcommand{\aa}{\alpha_1}
\newcommand{\ab}{\alpha_2}
\newcommand{\ba}{\beta_1}
\newcommand{\bb}{\beta_2}
\renewcommand{\l}{\mathcal{L}}
\newcommand{\lbar}{\smash[b]{\conj{\mathcal{L}}}}
\newcommand{\boxb}{\square_b}
\newcommand{\boxbt}{\square_b^t}
\newcommand{\restrict}{\raise-.3ex\hbox{\ensuremath|}}
\newcommand{\hpqs}[2]{\mathcal{H}_{#1,#2}(\3)}
\newcommand{\hks}[1]{\mathcal{H}_{#1}(\3)}
\newcommand{\hpq}[3]{\mathcal{H}_{#1,#2}(#3)}
\newcommand{\hk}[2]{\mathcal{H}_{#1}(#2)}
\newcommand{\ppq}[3]{\mathcal{P}_{#1,#2}(#3)}
\newcommand{\pk}[2]{\mathcal{P}_{#1}(#2)}
\newcommand{\pderiv}[1]{\dfrac{\partial}{\partial #1}}
\DeclareMathOperator{\spec}{spec}
\DeclareMathOperator{\essspec}{essspec}
\DeclareMathOperator{\Span}{span}
\title{Spectrum of  the Kohn Laplacian on the Rossi sphere}
\author{Tawfik Abbas}
\address[Tawfik Abbas]{Michigan State University, Department of Mathematics, East Lansing, MI 48824, USA}
\email{abbastaw@msu.edu}
\author{Madelyne M. Brown}
\address[Madelyne M. Brown]{Bucknell University, Department of Mathematics, Lewisburg, PA 17837, USA}
\email{mmb021@bucknell.edu}
\author{Ravikumar Ramasami}
\address[Ravikumar Ramasami]{University of Michigan--Dearborn, Department of 
	Mathematics \& Statistics, Dearborn, MI 48128, USA}
\email{rramasam@umich.edu}
\author{Yunus E. Zeytuncu}
\address[Yunus E. Zeytuncu]{University of Michigan--Dearborn, Department of 
	Mathematics \& Statistics, Dearborn, MI 48128, USA}
\email{zeytuncu@umich.edu}
\keywords{Kohn Laplacian, spherical harmonics, global embeddability of CR manifolds}
\subjclass[2010]{Primary 32V30; Secondary 32V05}
\thanks{This work is supported by NSF (DMS-1659203) and the University of Michigan--Dearborn. The work of the fourth author is also partially supported by a grant from the Simons Foundation (\#353525).}
\begin{document}
\maketitle

    \begin{abstract}
    We study the spectrum of the Kohn Laplacian $\boxbt$ on the Rossi example $(\3, \l_t)$. In particular we show that $0$ is in the essential spectrum of $\boxbt$, which yields another proof of the global non-embeddability of the Rossi example.
    \end{abstract}


\section{Introduction}
 
   When is an abstract CR-manifold globally CR-embeddable into $\C^N$? Rossi showed that the CR-manifold $(\3, \l_t)$ is not CR-embeddable \cite{Rossi65}, where $\3$ is the 3-sphere in $\2$,
    \[
        \l_t = \conj{z_1} \pderiv{z_2} - \conj{z_2} \pderiv{z_1} + \conj{t} \left( z_1 \pderiv{\conj{z_2}} - z_2 \pderiv{\conj{z_1}} \right),
    \] 
    and $|t| < 1$. In the case of strictly pseudoconvex CR-manifolds Boutet de Monvel proved that if the real dimension of the manifold is at least $5$, then it can always be globally CR-embedded into $\C^N$ for some $N$ \cite{Monvel75}. Later Burns approached this problem in the $\overline{\partial}$ context and showed that if the tangential operator $\conj{\partial}_{b,t}$ has closed range and the Szeg\"{o} projection is bounded, then the CR-manifold is CR-embeddable into $\C^N$ \cite{Burns79}. Later in 1986, Kohn showed that CR-embeddability is equivalent to showing that the tangential Cauchy-Riemann operator $\conj{\partial}_{b,t}$ has closed range \cite{Kohn85}. We refer to \cite[Chapter 12]{ChenShaw01PDE} for a full account of these results and also to \cite{Boggess91CR} for general theory of CR-manifolds. 
    
In the setting of the Rossi example, as an application of the closed graph theorem, $\conj{\partial}_{b,t}$ has closed range if and only if the Kohn Laplacian 
$$\boxbt=-\l_t \frac{1 + |t|^2}{(1 - |t|^2)^2} \lbar_t$$ 
has closed range, see \cite[0.5]{BurnsEpstein}. Furthermore, the closed range property is equivalent to the positivity of the essential spectrum of $\boxbt$, see \cite{Fu2005} for similar discussion. In this note we tackle the problem of embeddability, from the perspective of spectral analysis. In particular, we show that 0 is in the essential spectrum of $\boxbt$, so the Rossi sphere is not globally CR-embeddable in $\C^N$. This provides a different approach to the results in \cite{Burns79,Kohn85}.
    
    We start our analysis with the spectrum of $\boxbt$. We utilize spherical harmonics to construct finite dimensional subspaces of $L^2(\3)$ such that $\boxbt$ has tridiagonal matrix representations on these subspaces. We then use these matrices to compute eigenvalues of $\boxbt$. We also present numerical results obtained by \textit{Mathematica} that motivate most of our theoretical results. We then present an upper bound for small eigenvalues and we exploit this bound to find a sequence of eigenvalues that converge to 0. 
    
    In addition to particular results in this note, our approach can be adopted to study possible other perturbations of the standard CR-structure on the 3-sphere, such as in \cite{BurnsEpstein}. Furthermore, our approach also leads some information on the growth rate of the eigenvalues and possible connections to finite-type (order of contact with complex varieties) results similar to the ones in \cite{Fu2008}. We plan to address these issues in future papers.


    \section{Analysis of $\boxb$ on $\hpqs{p}{q}$} 
    
    
    \subsection{Spherical Harmonics}
    We start with a quick overview of spherical harmonics, we refer to \cite{Axler13Harmonic} for a detailed discussion.
    We will state the relevant theorems on $\2$ and $\3 \subseteq \2$. A polynomial in $\2$ looks like
    \[
        p(z, \conj{z}) = \sum_{\alpha, \beta} c_{\alpha, \beta} z^\alpha \conj{z}^\beta
    \]
    where each $c_{\alpha, \beta} \in \C$, and $\alpha$, $\beta$ are multi-indices. That is, $\alpha = (\aa, \ab), z^\alpha = z_1^{\aa} z_2^{\ab}$, and $|\alpha| = \aa + \ab$.

    We denote the space of all homogeneous polynomials on $\2$ of degree $m$ by $\pk{m}{\2}$, and we let $\hk{m}{\2}$ denote the subspace of $\pk{m}{\2}$ that consists of all harmonic homogeneous polynomials on $\2$ of degree $m.$ We use $\pk{m}{\3}$ and $\hk{m}{\3}$ to denote the restriction of $\pk{m}{\2}$ and $\hk{m}{\2}$ onto $\3$.  We denote the space of complex homogenous polynomials on $\2$ of bidegree $p,q$ by $\ppq{p}{q}{\2}$, and those polynomials that are homogeneous and harmonic by $\hpq{p}{q}{\2}$. As before, we denote $\ppq{p}{q}{\3}$ and $\hpq{p}{q}{\3}$ as the polynomials of the previous spaces, but restricted to $\3$.
We recall that on $\2$, the Laplacian is defined as $\Delta = 4(\frac{\partial^2}{\partial z_1 \partial \conj{z_1}} + \frac{\partial^2}{\partial z_2 \partial \conj{z_2}})$. As an example, the polynomial $z_1 \conj{z_2} - 2 z_2 \conj{z_1} \in \ppq{1}{1}{\2}$, and $z_1 \conj{z_2}^2 \in \hpq{1}{2}{\C^2}$. We take our first step by stating the following theorem.
	
	\begin{theorem}\cite[Theorem 5.1]{Axler13Harmonic} \label{poissonpoly}
	If $p$ is a polynomial on $\2$ of degree $m,$ then 
	\[
	    P[p\restrict_{\3}] = (1 - |z|^2)q + p
	\]
	for some polynomial $q$ of degree at most $m-2.$
	\end{theorem}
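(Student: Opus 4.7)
The plan is to reduce the problem to the homogeneous case and then invoke the Fischer decomposition together with the uniqueness of harmonic extensions.

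First, by linearity of the Poisson operator $P[\,\cdot\,]$ and of the desired identity, it suffices to prove the statement when $p$ is homogeneous of degree $m$. So assume $p \in \pk{m}{\2}$.

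Next I would appeal to the Fischer decomposition of $\pk{m}{\2}$, which says that every homogeneous polynomial of degree $m$ on $\R^4 \cong \2$ can be written uniquely as
\[
   p = \sum_{j=0}^{\lfloor m/2 \rfloor} |z|^{2j}\, h_{m-2j},
\]
where each $h_{m-2j} \in \hk{m-2j}{\2}$ is a harmonic homogeneous polynomial. Restricting to $\3$, where $|z|^2 = 1$, gives
\[
   p\restrict_{\3} = \sum_{j=0}^{\lfloor m/2 \rfloor} h_{m-2j}\restrict_{\3}.
\]
Since each $h_{m-2j}$ is harmonic in the open unit ball and continuous up to $\3$, uniqueness of solutions to the Dirichlet problem yields $P[h_{m-2j}\restrict_{\3}] = h_{m-2j}$. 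Therefore
\[
   P[p\restrict_{\3}] = \sum_{j=0}^{\lfloor m/2 \rfloor} h_{m-2j}.
\]

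Finally I would compare $P[p\restrict_{\3}]$ to $p$ using the telescoping identity $|z|^{2j} - 1 = -(1 - |z|^2)\bigl(1 + |z|^2 + \cdots + |z|^{2(j-1)}\bigr)$. This gives
\[
   p - P[p\restrict_{\3}] = \sum_{j=1}^{\lfloor m/2 \rfloor} \bigl(|z|^{2j} - 1\bigr) h_{m-2j} = -(1 - |z|^2)\, q,
\]
where
\[
   q = \sum_{j=1}^{\lfloor m/2 \rfloor} \bigl(1 + |z|^2 + \cdots + |z|^{2(j-1)}\bigr) h_{m-2j}.
\]
Each summand has degree $2(j-1) + (m-2j) = m-2$, so $\deg q \le m-2$, and rearranging yields $P[p\restrict_{\3}] = (1 - |z|^2)q + p$, as desired.

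The main obstacle is ensuring the Fischer decomposition is available in the form stated; this is a standard fact about harmonic polynomials on $\R^n$ (here $n=4$), but it is the key algebraic input. The remainder is bookkeeping with degrees and the telescoping algebraic identity. No delicate analysis is needed, since the only analytic input is the trivial fact that a harmonic polynomial on the closed ball is its own Poisson extension.
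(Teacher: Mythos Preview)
The paper does not give its own proof of this statement; it is simply cited from Axler--Bourdon--Ramey. Your argument is correct: the Fischer decomposition, combined with the observation that a harmonic polynomial is its own Poisson extension, yields the result after the telescoping computation you describe, and the degree count $2(j-1)+(m-2j)=m-2$ is right.

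The only point worth flagging is logical ordering. In the paper (mirroring the source text) this theorem is listed \emph{before} the Fischer decomposition (Theorem~\ref{decomposepkmultiple}), so invoking the latter to prove the former reverses the intended flow of dependencies. This is not a mathematical problem, since the Fischer decomposition can be---and usually is---established by purely algebraic means (for instance, via the adjointness of multiplication by $|z|^2$ and the Laplacian under the Fischer inner product on polynomials, followed by a dimension count), with no appeal to Poisson integrals. So there is no circularity, but if you were writing this up you would want to state explicitly that you are taking the Fischer decomposition as your starting point rather than as a downstream consequence.
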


	This theorem highlights how the Poisson integral of an $m$ degree polynomial on $\3$ can be represented by a polynomial decomposition. As the Poisson integral yields a harmonic polynomial, the polynomial decomposition will be harmonic. 
	
	    Similarly, we have the following decomposition for the space of homogeneous polynomials into a space of harmonic polynomials and a space of homogeneous polynomials with a factor of $|z|^2$.
    \begin{theorem}\cite[Theorem 5.5]{Axler13Harmonic} \label{decomposepksingle}
    If $m \geq 2,$ then 
    \[
        \pk{m}{\2} = \hk{m}{\2} \oplus |z|^2\pk{m-2}{\2},
    \]
    and
    \[\ppq{p}{q}{\2} = \hpq{p}{q}{\2} \oplus |z|^2 \ppq{p-1}{q-1}{\2}.\]
    \end{theorem}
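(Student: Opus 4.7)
The plan is to equip $\pk{m}{\2}$ with a bidegree-homogeneous Hermitian inner product under which multiplication by $|z|^2$ and the Laplacian $\Delta$ are formal adjoints of one another, and then read off both statements as orthogonal decompositions in a finite-dimensional inner product space. This is the standard Fischer approach and it handles the complex-bidegree refinement uniformly with the homogeneous statement.

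Treat $z_1, z_2, \bar z_1, \bar z_2$ as four independent indeterminates and define the Fischer pairing on monomials by
$$\langle z^\alpha \bar z^\beta,\; z^\gamma \bar z^\delta \rangle \;:=\; \alpha!\,\beta!\,\delta_{\alpha\gamma}\delta_{\beta\delta},$$
extended sesquilinearly to $\pk{m}{\2}$. A direct monomial check shows that under this pairing multiplication by $z_j$ is the adjoint of $\partial/\partial z_j$, and likewise for the conjugate variables. Consequently, multiplication by $|z|^2 = z_1\bar z_1 + z_2 \bar z_2$ is the adjoint of $\partial_{z_1}\partial_{\bar z_1} + \partial_{z_2}\partial_{\bar z_2} = \tfrac14\Delta$. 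From this one concludes, for $q \in \pk{m}{\2}$, that $q \perp |z|^2\pk{m-2}{\2}$ if and only if $\langle p, \Delta q\rangle = 0$ for every $p \in \pk{m-2}{\2}$, which is equivalent to $\Delta q = 0$, i.e.\ $q \in \hk{m}{\2}$. Since multiplication by $|z|^2$ is injective on polynomials, the two subspaces together span $\pk{m}{\2}$ by dimension count, yielding
$$\pk{m}{\2} \;=\; \hk{m}{\2} \;\oplus\; |z|^2\,\pk{m-2}{\2}.$$

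For the bidegree version, observe that the Fischer pairing vanishes on any pair of monomials of differing bidegree, so the orthogonal decomposition above is automatically compatible with the grading $\pk{m}{\2} = \bigoplus_{p+q=m}\ppq{p}{q}{\2}$. Restricting to the $(p,q)$-summand and using that multiplication by $|z|^2$ shifts bidegree by $(1,1)$ yields the refinement $\ppq{p}{q}{\2} = \hpq{p}{q}{\2} \oplus |z|^2\,\ppq{p-1}{q-1}{\2}$. The only genuinely computational step, and thus the main obstacle, is verifying the adjoint identity between multiplication by $z_j$ (resp.\ $\bar z_j$) and $\partial_{z_j}$ (resp.\ $\partial_{\bar z_j}$) under the pairing above; this is a single-monomial calculation where one must take care to treat $z_j$ and $\bar z_j$ as independent formal variables so that the factorials $\alpha!\,\beta!$ in the normalization produce the correct matching factor $\alpha_j+1$ on both sides. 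Once this adjoint is in hand, the rest of the argument is standard finite-dimensional linear algebra.
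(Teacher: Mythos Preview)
The paper does not supply its own proof of this statement; it is cited from Axler--Bourdon--Ramey without argument, so there is no in-paper proof to compare against. Your Fischer-pairing argument is correct. For context, the proof in the cited reference is not purely algebraic: directness of the sum is obtained from the maximum principle (a harmonic polynomial divisible by $|z|^2$ vanishes on the unit sphere, hence vanishes identically), and spanning then follows by observing that $q \mapsto \Delta(|z|^2 q)$ is a linear isomorphism of $\pk{m-2}{\2}$, so every $p\in\pk{m}{\2}$ can be corrected by some $|z|^2 q$ to become harmonic. Your route has the pleasant feature that the bidegree refinement $\ppq{p}{q}{\2} = \hpq{p}{q}{\2} \oplus |z|^2\,\ppq{p-1}{q-1}{\2}$ falls out immediately from the orthogonality of distinct bidegrees under the pairing, with no separate bookkeeping. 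One small expository point: once you have identified $\hk{m}{\2}$ as the orthogonal complement of $|z|^2\pk{m-2}{\2}$ in the finite-dimensional inner product space $\pk{m}{\2}$, the direct-sum decomposition is automatic; the sentence invoking injectivity of multiplication by $|z|^2$ and a ``dimension count'' is superfluous and can be dropped.
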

    
    By applying the previous statement multiple times to the homogeneous part of a polynomial decomposition, we arrive at the following theorem. 
    
    \begin{theorem}\cite[Theorem 5.7]{Axler13Harmonic} \label{decomposepkmultiple}
    	Every $p \in \pk{m}{\2}$ can be uniquely written in the form 
    	\[
    	    p = p_{m} + |z|^2 p_{m-2} + ... + |z|^{2k}p_{m-2k}
    	\]
    	where $k = [\frac{m}{2}]$ and each $p_{i} \in \hk{m}{\2},$ where $[x]$ means the nearest integer to $x$. 
	\end{theorem}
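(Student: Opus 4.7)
The plan is to prove the decomposition by strong induction on the degree $m$, iteratively applying Theorem~\ref{decomposepksingle} to strip off one factor of $|z|^2$ at each step.

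For the base cases $m = 0$ and $m = 1$, every homogeneous polynomial of degree less than $2$ is annihilated by $\Delta$ and is therefore automatically harmonic, so one simply takes $p = p_m$ with $k = 0$, matching $k = \lfloor m/2 \rfloor$.

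For the inductive step, suppose the result holds for all degrees strictly less than $m$, where $m \geq 2$. Theorem~\ref{decomposepksingle} gives the direct sum decomposition
\[
\pk{m}{\2} \;=\; \hk{m}{\2} \,\oplus\, |z|^2 \pk{m-2}{\2},
\]
so every $p \in \pk{m}{\2}$ can be written uniquely as $p = p_m + |z|^2 q$ with $p_m \in \hk{m}{\2}$ and $q \in \pk{m-2}{\2}$. Applying the inductive hypothesis to $q$ yields a unique representation
\[
q \;=\; p_{m-2} + |z|^2 p_{m-4} + \cdots + |z|^{2(k-1)} p_{m-2k},
\]
with each $p_{m-2j} \in \hk{m-2j}{\2}$ and $k - 1 = \lfloor (m-2)/2 \rfloor$, hence $k = \lfloor m/2 \rfloor$. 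Substituting back produces the desired decomposition for $p$. Uniqueness propagates automatically: any two such decompositions of $p$ must share the same $p_m$ and the same $q$ by the directness of the sum in Theorem~\ref{decomposepksingle}, and then their remaining terms coincide by the inductive uniqueness for $q$.

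There is no real obstacle here beyond correctly invoking Theorem~\ref{decomposepksingle}; the entire content of this theorem is that its single-step predecessor iterates. The only minor item to track is parity: the recursion terminates at a degree-$0$ piece when $m$ is even and a degree-$1$ piece when $m$ is odd, and in both cases this bottom-level piece lies in the appropriate $\hk{m-2k}{\2}$ automatically, matching the prescribed value $k = \lfloor m/2 \rfloor$.
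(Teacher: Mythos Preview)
Your argument is correct and is precisely the standard proof: iterate Theorem~\ref{decomposepksingle} via strong induction on the degree, with uniqueness inherited from the directness of each single-step splitting. Note, however, that the paper does not supply its own proof of this statement at all; it is quoted as background from \cite[Theorem~5.7]{Axler13Harmonic}, and Axler's proof there is exactly the induction you have written. One small cosmetic point: the paper's statement writes ``each $p_i \in \hk{m}{\2}$,'' which is a typo for $p_{m-2j} \in \hk{m-2j}{\2}$; you have silently (and correctly) read it that way.
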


    This yields to the following decomposition of the space of square integrable functions on $\3$.
    
    \begin{theorem}\cite[Theorem 5.12]{Axler13Harmonic} \label{L2S3Directsum}
    $L^2 (\3) = \bigoplus^\infty_{m = 0} \hk{m}{\3}$.
    \end{theorem}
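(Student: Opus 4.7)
The plan is to combine a density result (from Stone--Weierstrass plus the previous decomposition theorems) with an orthogonality result (from Green's identity), which together give exactly the Hilbert space direct sum decomposition.

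First I would establish density of polynomials. By Stone--Weierstrass, the algebra of polynomials in $z_1, z_2, \conj{z_1}, \conj{z_2}$ restricted to $\3$ is dense in $C(\3)$ in the sup norm, since this algebra separates points, contains the constants, and is closed under complex conjugation. Since $C(\3)$ is dense in $L^2(\3)$, the space of polynomial restrictions $\bigcup_m \pk{m}{\3}$ is dense in $L^2(\3)$. Now by Theorem \ref{decomposepkmultiple}, every $p \in \pk{m}{\2}$ decomposes as $p = p_m + |z|^2 p_{m-2} + \dots + |z|^{2k} p_{m-2k}$ with $p_i \in \hk{i}{\2}$. Restricting to $\3$ where $|z|^2 = 1$, every element of $\pk{m}{\3}$ is a finite sum of elements of $\hk{i}{\3}$. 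Hence $\bigcup_m \hk{m}{\3}$ is total in $L^2(\3)$.

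Next I would show orthogonality: if $m \neq k$, $u \in \hk{m}{\3}$, and $v \in \hk{k}{\3}$, then $\int_{\3} u \conj{v} \, dS = 0$. The argument is the standard one via Green's identity on the unit ball $B \subset \4$. Let $U, V$ be the homogeneous harmonic extensions to $\4$ of degrees $m$ and $k$ respectively (taking $V$ conjugated appropriately so it is still harmonic in the real sense). Green's identity gives
\[
\int_{\3} \left( U \pfderiv{\conj{V}}{r} - \conj{V} \pfderiv{U}{r} \right) dS = \int_B \left( U \Delta \conj{V} - \conj{V} \Delta U \right) dV = 0.
\]
By Euler's relation for homogeneous functions, $r \partial_r U = m U$ and $r \partial_r \conj{V} = k \conj{V}$, so on $\3$ the above integral equals $(k - m) \int_{\3} U \conj{V} \, dS$. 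Since $m \neq k$, the integral vanishes.

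Finally, I would combine these: the subspaces $\hk{m}{\3}$ are pairwise orthogonal, their union is total, and each is finite dimensional (hence closed), so $L^2(\3) = \bigoplus_{m=0}^\infty \hk{m}{\3}$ as a Hilbert space direct sum. The main subtlety is the orthogonality computation, since one must correctly invoke homogeneity for complex-valued harmonic polynomials; everything else is a routine application of Stone--Weierstrass and Theorem \ref{decomposepkmultiple}.
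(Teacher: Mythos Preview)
Your proposal is correct and is essentially the standard proof of this decomposition (density via Stone--Weierstrass and Theorem~\ref{decomposepkmultiple}, orthogonality via Green's identity and Euler's relation). Note, however, that the paper does not supply its own proof of this theorem at all: it is simply quoted as \cite[Theorem~5.12]{Axler13Harmonic}, and the argument in that reference is precisely the one you outline. So there is nothing to compare against; your write-up just fills in what the paper leaves to the citation.
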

    
    The previous theorem is essential to the spectral analysis of $\boxbt$ on $L^2(\3)$ since it decomposes the infinite dimensional space $L^2(\3)$ into finite dimensional pieces, which is necessary for obtaining the matrix representation of $\boxbt$. In order to get such a matrix representation, we need a method for obtaining a basis for $\hks{k}$. Theorem \ref{basishk} presents a method to do so for  $\hk{m}{\2}$ and Theorem \ref{basishpq} presents a method for $\hpq{p}{q}{\2}$. The dimension of the matrix representation on a particular $\hks{m}$ is the dimension of the subspace $\hks{m}$, which is given below and analogously given for $\hpq{p}{q}{\2}$.
    
    \begin{theorem}\cite[Proposition 5.8]{Axler13Harmonic} \label{sizehk}
    If $m \geq 2,$ then 
    \[
        \dim \hk{m}{\2} = \binom{n + m -1}{n-1} - \binom{n + m - 3}{n - 1}, 
    \]
    \[\dim \ppq{p}{q}{\2} = (p+1)(q+1),\]
    and
     \begin{gather*}
            \dim \hpq{p}{q}{\2} = p + q + 1 \\
            \dim \hk{k}{\2} = (k + 1)^2.
        \end{gather*}
    \end{theorem}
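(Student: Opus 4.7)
The plan is to derive each of the four dimension formulas in turn, combining a direct monomial count with the decomposition theorems already established in Theorem \ref{decomposepksingle}. Throughout, I would identify $\pk{m}{\2}$ with the space of homogeneous polynomials of degree $m$ in the four real coordinates underlying $\2$, equivalently the span of monomials $z_1^{a_1} z_2^{a_2} \conj{z_1}^{b_1} \conj{z_2}^{b_2}$ with $a_1+a_2+b_1+b_2 = m$.

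First I would dispatch $\dim \ppq{p}{q}{\2} = (p+1)(q+1)$ by writing down the monomial basis explicitly: the multi-indices $(a_1,a_2)$ with $a_1+a_2=p$ contribute $p+1$ choices for the holomorphic part, and independently $q+1$ choices for the antiholomorphic part. The same stars-and-bars count, applied to homogeneous polynomials of degree $m$ in $n$ real variables, gives $\dim \pk{m}{\R^n} = \binom{n+m-1}{n-1}$.

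Next, for the harmonic dimensions I would apply the two direct-sum decompositions from Theorem \ref{decomposepksingle}, noting that multiplication by $|z|^2$ is injective on each summand (since the polynomial ring is an integral domain), so that dimensions subtract cleanly. For the total-degree case,
\[
\dim \hk{m}{\2} = \dim \pk{m}{\2} - \dim \pk{m-2}{\2} = \binom{n+m-1}{n-1} - \binom{n+m-3}{n-1},
\]
viewed in $n = 4$ real variables. Specializing to $n=4$ and simplifying (factor $(m+1)$ from the difference of triple products) collapses the right-hand side to $(m+1)^2$, yielding $\dim \hk{k}{\2} = (k+1)^2$. For the bidegree version,
\[
\dim \hpq{p}{q}{\2} = \dim \ppq{p}{q}{\2} - \dim \ppq{p-1}{q-1}{\2} = (p+1)(q+1) - pq = p+q+1.
\]

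There is no genuine obstacle here; the theorem is a direct corollary of Theorem \ref{decomposepksingle} together with elementary counting. The only point that requires a brief justification is the injectivity of multiplication by $|z|^2$, which is what legitimizes converting each direct-sum decomposition into an equation of dimensions, and the arithmetic simplification from the binomial difference to $(m+1)^2$, which is a short algebraic computation rather than a conceptual difficulty.
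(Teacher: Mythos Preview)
Your argument is correct. Note, however, that the paper does not actually supply a proof of this statement: it is quoted as \cite[Proposition 5.8]{Axler13Harmonic} and left unproved in the text. So there is no ``paper's own proof'' to compare against. That said, your approach---counting monomials to get $\dim\ppq{p}{q}{\2}$ and $\dim\pk{m}{\R^n}$, then subtracting via the direct-sum decompositions of Theorem~\ref{decomposepksingle}---is precisely the standard argument one finds in the cited reference, so you have reconstructed the intended proof.
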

    
Now we present a method to obtain explicit bases of spaces of spherical harmonics. These bases play an essential role in explicit calculations in the next section. Here, $K$ denotes the Kelvin trasform,
\[Kg(z)=|z|^{-2}g\left(\frac{z}{|z|^2}\right)
.\]    
   
    \begin{theorem}\cite[Theorem 5.25]{Axler13Harmonic} \label{basishk}
    If $ n > 2$ then the set \[\{K[D^\alpha |z|^{-2}]:|\alpha| = m \text{ and } \alpha_1 \leq 1 \} \] is a vector space basis of $\hk{m}{\2},$ and the set \[\{D^\alpha |z|^{-2} : |\alpha| = m \text{ and } \aa \leq 1\}\] is a vector space basis of $\hk{m}{\3}.$
    \end{theorem}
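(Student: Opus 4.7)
The plan is to verify in three steps that the listed set is a basis of $\hk{m}{\2}$: first, show each $K[D^\alpha|z|^{-2}]$ belongs to $\hk{m}{\2}$ and count them; second, prove linear independence; third, deduce the $\hk{m}{\3}$ statement. I identify $\2$ with $\R^4$ throughout, so $\alpha=(\alpha_1,\ldots,\alpha_4)$ is a four-index and $D^\alpha=\partial_1^{\alpha_1}\cdots\partial_4^{\alpha_4}$.

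Since $|z|^{-2}$ is harmonic on $\R^4\setminus\{0\}$ (direct computation, or the general fact that $|z|^{2-n}$ is harmonic on $\R^n\setminus\{0\}$ for $n\ge 3$) and $\Delta$ commutes with $D^\alpha$, each $D^\alpha|z|^{-2}$ is harmonic on $\R^4\setminus\{0\}$ and homogeneous of degree $-2-m$. A direct check shows the Kelvin transform sends such a function to one that is harmonic and homogeneous of degree $m$, and this extends across the origin by removable singularity to an element of $\hk{m}{\2}$. Splitting on $\alpha_1\in\{0,1\}$, the number of admissible $\alpha$ equals $\binom{m+2}{2}+\binom{m+1}{2}=(m+1)^2=\dim\hk{m}{\2}$ by Theorem~\ref{sizehk}, so it remains to establish linear independence.

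The main technical step is linear independence, which I would approach through the linear map $T\colon\pk{m}{\2}\to\hk{m}{\2}$ defined by $T(p)=K[p(D)|z|^{-2}]$, where $p(D)$ denotes the constant-coefficient differential operator associated to $p$. Because $\Delta|z|^{-2}=0$ on $\R^4\setminus\{0\}$, the subspace $|z|^2\pk{m-2}{\2}$ lies in $\ker T$, and by the decomposition $\pk{m}{\2}=\hk{m}{\2}\oplus|z|^2\pk{m-2}{\2}$ from Theorem~\ref{decomposepksingle}, the basis property reduces to two subclaims: (a) the kernel equality $\ker T=|z|^2\pk{m-2}{\2}$, equivalent to $h(D)|z|^{-2}\neq 0$ for every nonzero $h\in\hk{m}{\2}$; and (b) the monomials $\{z^\alpha:|\alpha|=m,\ \alpha_1\le 1\}$ projecting to a basis of the quotient $\pk{m}{\2}/|z|^2\pk{m-2}{\2}$. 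For (a), I would derive the formula $h(D)|z|^{-2}=c_m|z|^{-2m-2}h(z)$ with $c_m\neq 0$, which follows because $h\mapsto h(D)|z|^{-2}$ is an $SO(4)$-intertwiner between irreducible representations (plus an explicit computation on a single harmonic polynomial to rule out $c_m=0$). For (b), spanning follows by iterating $z_1^2=|z|^2-z_2^2-z_3^2-z_4^2$ to lower the first exponent, and the direct-sum property comes from a downward induction on the first exponent: if $\sum_{\alpha_1\le 1}c_\alpha z^\alpha=|z|^2 g$ with $g\in\pk{m-2}{\2}$, then comparing coefficients of $z^{\beta+2e_1}$ at successive values of $\beta_1$ forces every coefficient of $g$ to vanish, hence every $c_\alpha$ as well.

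Finally, the $\hk{m}{\3}$ claim is immediate because $|z|=1$ on $\3$ makes $K$ act as the identity on sphere restrictions: $K[D^\alpha|z|^{-2}]\big|_{\3}=D^\alpha|z|^{-2}\big|_{\3}$. The hardest part of the whole plan is step (a), proving the nonvanishing $h(D)|z|^{-2}\neq 0$ for $0\neq h\in\hk{m}{\2}$; the monomial-basis argument in (b) is routine multi-index bookkeeping, and the membership, counting, and sphere restriction are all direct consequences of standard properties of the Kelvin transform and the decomposition in Theorem~\ref{decomposepksingle}.
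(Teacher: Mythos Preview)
The paper does not prove this theorem: it is quoted verbatim from \cite[Theorem 5.25]{Axler13Harmonic} and used as a black box, so there is no ``paper's own proof'' to compare against. Your proposal is therefore not redundant with anything in the paper.

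As for correctness, your outline is sound and follows the standard argument one finds in the cited reference. The membership and counting steps are routine and correct; the $\3$ statement follows immediately from $|z|=1$ on the sphere, as you say. In the linear-independence step, once you know the count matches $\dim\hk{m}{\2}=(m+1)^2$, you only need \emph{one} of (a) or (b) together with spanning (not both), since $T$ restricted to the span of your monomials is then a linear map between spaces of equal dimension; your spanning argument in (b) via $z_1^2=|z|^2-z_2^2-z_3^2-z_4^2$ already shows $T$ is onto once (a) is in hand, so the downward-induction independence check in (b) is redundant. The genuinely nontrivial point is exactly the one you flag: the identity $h(D)|z|^{-2}=c_m\,|z|^{-2m-2}h(z)$ with $c_m\neq 0$ for nonzero $h\in\hk{m}{\2}$. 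Your Schur-lemma sketch is a valid route, but note that to invoke irreducibility of $\hk{m}{\2}$ under $SO(4)$ you are importing a representation-theoretic fact of roughly the same depth as the theorem itself; the treatment in \cite{Axler13Harmonic} instead proves this formula directly by induction on $m$ using the product rule, which keeps the argument self-contained and elementary. Either way the plan goes through.
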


    It follows from the previous definition that the homogenous polynomials of degree $k$ can be written as the sum of polynomials of bidegree $p,q$ such that $p+q=k$.
     \begin{theorem} \label{pkissumppq}
        $\pk{k}{\2} = \bigoplus_{p+q=k} \ppq{p}{q}{\2}$.
    \end{theorem}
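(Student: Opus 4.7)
The plan is to verify the three standard conditions for a direct sum decomposition: each summand $\ppq{p}{q}{\2}$ with $p+q=k$ is contained in $\pk{k}{\2}$, every element of $\pk{k}{\2}$ lies in the sum, and the sum is direct.

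For the first containment, I would take a monomial $z^\alpha \conj{z}^\beta$ with $|\alpha|=p$ and $|\beta|=q$. Scaling $z \mapsto \lambda z$ (with $\lambda \in \R$) multiplies this by $\lambda^{p+q} = \lambda^k$, so every such monomial, and hence every element of $\ppq{p}{q}{\2}$, is homogeneous of degree $k$. Thus $\ppq{p}{q}{\2} \subseteq \pk{k}{\2}$ whenever $p+q=k$, and the full sum sits inside $\pk{k}{\2}$.

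For the reverse inclusion, I would take an arbitrary $f \in \pk{k}{\2}$ and expand it in the monomial basis as
\[
    f(z,\conj{z}) = \sum_{|\alpha|+|\beta|=k} c_{\alpha,\beta}\, z^\alpha \conj{z}^\beta,
\]
which is possible because homogeneity of degree $k$ forces $|\alpha|+|\beta|=k$ on every nonzero term. Grouping terms by the pair $(|\alpha|,|\beta|)$ yields
\[
    f = \sum_{p+q=k} f_{p,q}, \qquad f_{p,q} = \sum_{\substack{|\alpha|=p \\ |\beta|=q}} c_{\alpha,\beta}\, z^\alpha \conj{z}^\beta \in \ppq{p}{q}{\2},
\]
so $f$ lies in the indicated sum.

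Finally, for directness, I would observe that if $\sum_{p+q=k} f_{p,q}=0$ with each $f_{p,q} \in \ppq{p}{q}{\2}$, then collecting coefficients of the linearly independent monomials $z^\alpha \conj{z}^\beta$ forces each $f_{p,q}$ to vanish separately, since distinct pairs $(p,q)$ contribute monomials with disjoint multi-indices. Essentially no real obstacle appears here; the statement is just the observation that the bigrading of $\C[z_1,z_2,\conj{z_1},\conj{z_2}]$ by $(p,q)$ refines the single grading by total degree. The only point that demands attention is the uniqueness in the second step, which rests on the linear independence of monomials in $z$ and $\conj{z}$ viewed as functions on $\C^2$.
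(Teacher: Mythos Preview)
Your argument is correct; the paper itself gives no proof for this theorem, merely remarking that it ``follows from the previous definition,'' so you have supplied exactly the routine verification the authors chose to omit. The approach is the only natural one and matches the paper's implicit reasoning.
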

    
       Analogous to the version in Theorem \ref{basishk}, we use the following method to construct an orthogonal basis for $\hpq{p}{q}{\2}$ and $\hpqs{p}{q}$. 
    \begin{theorem} \label{basishpq}
        The set
        \[
            \left\{ K[\conj{D}^\alpha D^\beta |z|^{-2}] \;\bigg\lvert\; |\alpha| = p, |\beta| = q, \alpha_1 = 0 \text{ or } \beta_1 = 0 \right\}
        \]
        is a basis for $\hpq{p}{q}{\2}$, and the set
        \[
            \left\{ \conj{D}^\alpha D^\beta |z|^{-2} \;\bigg\lvert\; |\alpha| = p, |\beta| = q, \alpha_1 = 0 \text{ or } \beta_1 = 0 \right\}
        \]
        is an orthogonal basis for $\hpqs{p}{q}$.
    \end{theorem}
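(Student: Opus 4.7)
The strategy mimics the proof of Theorem \ref{basishk}, while keeping track of bidegree and using harmonicity to cut the generating set down to exactly $\dim \hpq{p}{q}{\2} = p+q+1$ elements.

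First, each candidate basis element lies in the right space. Since $|z|^{-2}$ is harmonic on $\4 \setminus \{0\}$, so is any constant-coefficient derivative $\conj{D}^\alpha D^\beta |z|^{-2}$, and the Kelvin transform then produces a harmonic polynomial of total degree $p+q$ by the same argument that underlies Theorem \ref{basishk}. To refine total degree to bidegree I would track scaling under $z\mapsto\lambda z$: because $\partial_{z_j}$ and $\partial_{\conj{z_j}}$ intertwine with this rescaling by factors $\lambda^{-1}$ and $\conj{\lambda}^{-1}$ respectively, one finds that $\conj{D}^\alpha D^\beta |z|^{-2}$ transforms by $\lambda^{-q-1}\conj{\lambda}^{-p-1}$, and the Kelvin transform (which sends bidegree $(a,b)$ to bidegree $(-1-b,-1-a)$) then produces bidegree $(p,q)$.

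Next, inclusion-exclusion on the pairs $(\alpha,\beta)$ with $|\alpha|=p$, $|\beta|=q$, and $\alpha_1=0$ or $\beta_1=0$ yields $(q+1)+(p+1)-1=p+q+1$, matching $\dim \hpq{p}{q}{\2}$ by Theorem \ref{sizehk}. So it suffices to show the constrained family spans. The \emph{unrestricted} family spans $\hpq{p}{q}{\2}$: the complex partials $\{\partial_{z_j},\partial_{\conj{z_j}}\}$ and the real partials generate the same algebra of constant-coefficient operators, so Theorem \ref{basishk} together with the bidegree splitting $\hk{p+q}{\2}=\bigoplus_{p'+q'=p+q}\hpq{p'}{q'}{\2}$ and the first step force the complex family of bidegree $(p,q)$ to span $\hpq{p}{q}{\2}$. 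To reduce to the constrained family I would apply the identity $\tfrac14\Delta=\partial_{z_1}\partial_{\conj{z_1}}+\partial_{z_2}\partial_{\conj{z_2}}$ to the harmonic function $\conj{D}^{\alpha-e_1}D^{\beta-e_1}|z|^{-2}$, obtaining the recursion
\[
\conj{D}^\alpha D^\beta |z|^{-2}= -\,\conj{D}^{\alpha-e_1+e_2} D^{\beta-e_1+e_2}|z|^{-2}\qquad\text{whenever }\alpha_1,\beta_1\geq 1,
\]
which iterates to rewrite every unrestricted generator as $\pm$ a constrained one.

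Finally, on $\3$ we have $|z|^2=1$, so $K[f]|_{\3}=f|_{\3}$ and the same formulas give a basis of $\hpqs{p}{q}$ without the Kelvin transform. For orthogonality I would exploit the diagonal torus $T^2\subset U(2)$: the function $\conj{D}^\alpha D^\beta |z|^{-2}$ is a $T^2$-weight vector of weight $\alpha-\beta$, and within the constrained family the first coordinates $\alpha_1-\beta_1$ take the distinct values $-q,-q+1,\ldots,p$, so all weights are distinct; since distinct $T^2$-weight spaces are automatically $L^2(\3)$-orthogonal, the constrained family is orthogonal on $\3$. The main obstacle is identifying the recursion that trades an unrestricted generator for a constrained one --- once that harmonicity identity is in hand, the dimension count promotes spanning to a basis automatically, and the torus argument takes care of orthogonality.
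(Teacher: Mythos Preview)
The paper states Theorem \ref{basishpq} without proof, presenting it only as ``analogous to the version in Theorem \ref{basishk}'' (which is itself quoted from \cite{Axler13Harmonic}) and then immediately moving on. So there is no proof in the paper to compare against.

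Your proposal is a correct and complete argument. The bidegree computation via $z\mapsto\lambda z$ scaling is right (and your claim that the Kelvin transform sends bidegree $(a,b)$ to $(-1-b,-1-a)$ checks out), the inclusion--exclusion count matches $\dim\hpq{p}{q}{\2}=p+q+1$, and the harmonicity recursion
\[
\conj{D}^\alpha D^\beta |z|^{-2}=-\,\conj{D}^{\alpha-e_1+e_2}D^{\beta-e_1+e_2}|z|^{-2}
\]
does exactly what you say: it reduces any unrestricted generator to $\pm$ a constrained one, so spanning plus the dimension count gives a basis. The $T^2$-weight argument for orthogonality on $\3$ is also sound: the first weight coordinate $\alpha_1-\beta_1$ runs through $-q,\dots,p$ without repetition over the constrained family, and distinct torus characters are orthogonal in $L^2(\3)$. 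Since the paper supplies no argument of its own, your write-up would in fact fill a genuine gap in the exposition.
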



    	\subsection{$\boxb$ on $\hpqs{p}{q}$}
	Before we study the operator $\boxbt$, we first need some background on a simpler operator we call $\boxb$. It arises from the CR-manifold $(\3, \l)$, and is defined as
	\[
		\boxb = -\l \lbar.
	\]
We note that this CR-structure is induced from $\2$ and this manifold is naturally embedded. By the machinery above we can compute the eigenvalues of $\boxb$.

	\begin{theorem}
		\label{boxbeigenvalue}
		Suppose $f \in \hpqs{p}{q}$. Then
		\[
			\boxb f = (pq + q)f.
		\]
	\end{theorem}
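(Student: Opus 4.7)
The plan is to reduce the computation on $\3$ to an ambient one on $\2$. Given $f \in \hpqs{p}{q}$, let $\tilde{f} \in \hpq{p}{q}{\2}$ be its unique bihomogeneous harmonic extension. Since $\l$ and $\lbar$ are CR vector fields tangent to $\3$, one has $\boxb f = -(\l \lbar \tilde{f})\restrict_{\3}$, and on $\2$ the extension $\tilde{f}$ interacts very simply with the complex Laplacian $\Delta = 4\bigl(\partial_{z_1}\partial_{\conj{z_1}} + \partial_{z_2}\partial_{\conj{z_2}}\bigr)$ and with the Euler operators $R = z_1\partial_{z_1} + z_2\partial_{z_2}$ and $\conj{R} = \conj{z_1}\partial_{\conj{z_1}} + \conj{z_2}\partial_{\conj{z_2}}$.

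The first concrete step is to expand the operator composition
\[
\l \lbar = \bigl(\conj{z_1}\partial_{z_2} - \conj{z_2}\partial_{z_1}\bigr)\bigl(z_1\partial_{\conj{z_2}} - z_2\partial_{\conj{z_1}}\bigr)
\]
using the commutation rules $[\partial_{z_i}, z_j] = \delta_{ij}$ and $[\partial_{z_i}, \conj{z_j}] = 0$. After expansion, the first-order remainder is $-\conj{z_1}\partial_{\conj{z_1}} - \conj{z_2}\partial_{\conj{z_2}} = -\conj{R}$, and the four second-order terms
\[
|z_1|^2\,\partial_{z_2}\partial_{\conj{z_2}} + |z_2|^2\,\partial_{z_1}\partial_{\conj{z_1}} - z_2\conj{z_1}\,\partial_{z_2}\partial_{\conj{z_1}} - z_1\conj{z_2}\,\partial_{z_1}\partial_{\conj{z_2}}
\]
group exactly into $\tfrac{|z|^2}{4}\Delta - R\conj{R}$. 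This gives the key identity
\[
\l \lbar = \tfrac{|z|^2}{4}\Delta - R\conj{R} - \conj{R}.
\]

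Evaluating on $\tilde{f}$ is then immediate: $\Delta \tilde{f} = 0$ by harmonicity, while $R\tilde{f} = p\tilde{f}$ and $\conj{R}\tilde{f} = q\tilde{f}$ by bihomogeneity, so $R\conj{R}\tilde{f} = pq\,\tilde{f}$. Restricting to $\3$, where $|z|^2 = 1$, yields $\l\lbar f = -(pq + q)f$, and hence $\boxb f = (pq + q)f$. The only delicate step is verifying the displayed identity; regrouping the cross terms into $\tfrac{|z|^2}{4}\Delta - R\conj{R}$ is the one place where careful bookkeeping is required. An alternative would be to act on the explicit basis of $\hpqs{p}{q}$ provided by Theorem \ref{basishpq}, but the ambient-operator identity produces the eigenvalue uniformly on the whole subspace in one stroke.
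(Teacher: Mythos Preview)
Your proof is correct and closely parallels the paper's, but the packaging is different enough to be worth a remark. Both arguments begin by expanding $\l\lbar$ as a second-order operator in the ambient variables. The paper then uses harmonicity in the pointwise form $\partial_{z_1}\partial_{\conj{z_1}}f = -\partial_{z_2}\partial_{\conj{z_2}}f$ to simplify, and finishes by checking the resulting operator on a single monomial $z^\alpha\conj{z}^\beta$. You instead regroup the same expansion into the operator identity $\l\lbar = \tfrac{|z|^2}{4}\Delta - R\conj{R} - \conj{R}$, after which harmonicity and bihomogeneity give the eigenvalue without ever passing to monomials. In fact, the expression the paper obtains after its harmonic substitution is exactly $R\conj{R} + \conj{R}$, so its monomial check is really a verification that the Euler operators act as $p$ and $q$; you have simply named those operators and applied Euler's theorem directly. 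Your route is slightly more conceptual and generalizes cleanly (for instance to $-\lbar\l$, where the roles of $R$ and $\conj{R}$ swap), while the paper's is more hands-on and requires no auxiliary identities.
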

	\begin{proof}
		Expanding the definition, we get
		\begin{alignat*}{2}
			\boxb &= &&-\left(\conj{z_2} \pderiv{z_1} - \conj{z_1} \pderiv{z_2}\right)\left(z_2 \pderiv{\conj{z_1}} - z_1 \pderiv{\conj{z_2}}\right) \\
			&= &&-\conj{z_2} \pderiv{z_1} \left(z_2 \pderiv{\conj{z_1}} - z_1 \pderiv{\conj{z_2}}\right) + \conj{z_1} \pderiv{z_2} \left(z_2 \pderiv{\conj{z_1}} - z_1 \pderiv{\conj{z_2}}\right) \\
			&= &&-z_2 \conj{z_2} \frac{\partial^2}{\partial z_1 \partial \conj{z_1}} + \conj{z_2} \pderiv{\conj{z_2}} + z_1 \conj{z_2} \frac{\partial^2}{\partial z_1 \partial \conj{z_2}} \\
			&&&-z_1 \conj{z_1} \frac{\partial^2}{\partial z_2 \partial \conj{z_2}} + \conj{z_1} \pderiv{\conj{z_1}} + z_2 \conj{z_1} \frac{\partial^2}{\partial z_2 \partial \conj{z_1}}
		\end{alignat*}
		Now, let $f \in \hpqs{p}{q}$. Since $f$ is harmonic, we know that $\frac{\partial^2}{\partial z_1 \partial \conj{z_1}} = -\frac{\partial^2}{\partial z_2 \partial \conj{z_2}}$. Substituting, we get
		\begin{alignat*}{1}
			&= z_2 \conj{z_2} \frac{\partial^2}{\partial z_2 \partial \conj{z_2}} + \conj{z_2} \pderiv{\conj{z_2}} + z_1 \conj{z_2} \frac{\partial^2}{\partial z_1 \partial \conj{z_2}} \\
			&+z_1 \conj{z_1} \frac{\partial^2}{\partial z_1 \partial \conj{z_1}} + \conj{z_1} \pderiv{\conj{z_1}} + z_2 \conj{z_1} \frac{\partial^2}{\partial z_2 \partial \conj{z_1}}
		\end{alignat*}
		Since $f$ is a polynomial and $\boxb$ is linear, it suffices to show that if $f = z^\alpha \conj{z}^\beta = z_1^{\aa} z_2^{\ab} \conj{z_1}^{\ba} \conj{z_2}^{\bb}$, where $\aa + \ab = p$ and $\ba + \bb = q$, then the claim holds. Using the expansion we got, each derivative expression simply becomes a multiple of $f$. So we get
		\begin{align*}
			\boxb f &= (\ab \bb + \bb + \aa \bb + \aa \ba + \ba + \ab \ba)f \\
			&= ((\aa + \ab)(\ba + \bb) + (\ba + \bb))f \\
			&= (pq + q)f
		\end{align*}
		and we are done.
	\end{proof}
	In a similar manner, we can show that $-\lbar \l f = (pq + p)f$. For this case, we actually have that $\spec(\boxb) = \{pq + q \mid p,q \in \N\}$, so $0 \notin \essspec(\boxbt)$ since it is not an accumulation point of the set above. 

    
    \section{Experimental Results in Mathematica}
    
     Using the symbolic computation environment provided by \textit{Mathematica}, we were able to write a program to streamline our calculations\footnote{Our code for this and the other symbolic computations described below is available on our website at \url{https://sites.google.com/a/umich.edu/zeytuncu/home/publ}}. We implemented the algorithm provided in Theorem \ref{basishpq} to construct the vector space basis of $\hks{k}$ for a specified $k$. As an example, our code produced the following basis of $\hks{3}$:
    \begin{multline*}
        \{-6 \overline{z_2}^3, 
        -6 \overline{z_1} \overline{z_2}^2,
        -6 \overline{z_1}^2 \overline{z_2},
        -6\overline{z_1}^3, 
        4 z_1\overline{z_1} \overline{z_2}-2 z_2\overline{z_2}^2,
        2 z_1 \overline{z_1}^2-4 z_2 \overline{z_1} \overline{z_2},
        -6 z_2 \overline{z_1}^2,
        -6 z_1\overline{z_2}^2, \\
        4 z_1 z_2 \overline{z_1}-2 z_2^2 \overline{z_2},
        -6 z_2^2 \overline{z_1},
        2 z_1^2 \overline{z_1}-4 z_1 z_2 \overline{z_2},
        -6 z_1^2 \overline{z_2}, 
        -6 z_2^3,
        -6 z_1 z_2^2,
        -6 z_1^2 z_2,
        -6 z_1^3 \}.
    \end{multline*}
Now, with the basis for $\hks{k}$, the matrix representation of $\boxbt$ on $\hks{k}$ can be computed for each $k$. In particular, we used this program to construct the matrix representations for $1\leq k\leq 12$. For a specific $k$, the code applies $\boxbt$ to each basis element of $\hks{k}$ obtained by the results in the previous sections. Then, using the inner product defined by,
    \[
        \langle f, g \rangle= \int_{\3} f \overline{g} \,d\sigma,
    \]
    where $\sigma$ is the standard surface area measure, the software computes $ \langle \boxbt f_i,f_j\rangle$, where $f_i,f_j$ are basis vectors for $\hks{k}$. With these results, \textit{Mathematica} yields the matrix representation for the imputed value of $k$. For example, the program produced the matrix representation for $k=3$ seen in Figure \ref{fig:matrix}. Since each entry had a common normalization factor,
    \[
        h=\frac{1+|t|^2}{(1-|t|^2)^2},
    \]
    this constant has been factored out.
    \begin{figure}[h] 
        \resizebox{\textwidth}{!}{$\displaystyle
       h \begin{pmatrix}
            3 & 0 & 0 & 0 & 0 & 0 & 0 & 0 & 0 & 0 & 0 & -6\overline{t} & 0 & 0 & 0 & 0\\
            0 & 3 & 0 & 0 & 0 & 0 & 0 & 0 & 0 & 0 & 6\overline{t} & 0 & 0 & 0 & 0 & 0\\
            0 & 0 & 3 & 0 & 0 & 0 & 0 & 0 & -6\overline{t} & 0 & 0 &0 & 0 & 0 & 0 & 0\\
            0 & 0 & 0 & 3 & 0 & 0 & 0 & 0 &0 & -6\overline{t} & 0 &0 & 0 & 0 & 0 & 0 \\
            0 & 0 & 0 & 0 & 4+3|t|^2 & 0 & 0 & 0 & 0 & 0 & 0 & 0 & 0 & 0 & -2\overline{t} & 0 \\
            0 & 0 & 0 & 0 & 0 & 4+3|t|^2 & 0 & 0 & 0 & 0 & 0 & 0 & 0 & 2\overline{t} & 0 & 0 \\
            0 & 0 & 0 & 0 & 0 & 0 & 4+3|t|^2 & 0 & 0 & 0 & 0 & 0 & -2\overline{t} & 0 & 0 & 0 \\
            0 & 0 & 0 & 0 & 0 & 0 & 0 & 4+3|t|^2 & 0 & 0 & 0 & 0 & 0 & 0 & 0 & -2\overline{t} \\
            0 & 0 & -2t & 0 & 0 & 0 & 0 & 0 & 3+4|t|^2 & 0 & 0 & 0 & 0 & 0 & 0 & 0\\
            0 & 0 & 0 & -2t & 0 & 0 & 0 & 0 & 0 & 3+4|t|^2 & 0 & 0 & 0 & 0 & 0 & 0\\
            0 & 2t & 0 & 0 & 0 & 0 & 0 & 0 & 0 & 0 & 3+4|t|^2 & 0 & 0 & 0 & 0 & 0\\
            -2t & 0 & 0 & 0 & 0 & 0 & 0 & 0 & 0 & 0 & 0 & 3+4|t|^2 & 0 & 0 & 0 & 0\\
            0 & 0 & 0 & 0 & 0 & 0 & -6t & 0 & 0 & 0 & 0 & 0 & 3|t|^2 & 0 & 0 & 0\\
            0 & 0 & 0 & 0 & 0 & 6t & 0 & 0 & 0 & 0 & 0 & 0 & 0 & 3|t|^2 & 0 & 0\\
            0 & 0 & 0 & 0 & -6t & 0 & 0 & 0 & 0 & 0 & 0 & 0 & 0 & 0 & 3|t|^2 & 0\\
            0 & 0 & 0 & 0 & 0 & 0 & 0 & -6t & 0 & 0 & 0 & 0 & 0 & 0 & 0 & 3|t|^2\\
        \end{pmatrix}$}
        \caption{Matrix Representation of $\square_b^t$ on $\hks{3}$}
        \label{fig:matrix}
    \end{figure}
    With \textit{Mathematica's} Eigenvalue function, the eigenvalues were then calculated for these matrix representations. Our numerical results suggest that the smallest non-zero eigenvalue of $\boxbt$ on $\hks{2k-1}$ decreases as $k$ increases.  Conversely, the smallest non-zero eigenvalue of $\boxbt$ on $\hks{2k}$ increases with $k$. The smallest eigenvalue of $\hks{2k-1}$ is plotted for $1\leq k \leq 5$ and $0<|t|<1$ in Figure \ref{fig:plot}. It is apparent that $\lambda_{\min,1}\leq \lambda_{\min,3}\leq \lambda_{\min,5}\leq \lambda_{\min,7}\leq \lambda_{\min,9}$ where $\lambda_{\min,k}$ denotes the smallest non-zero eigenvalue of $\boxbt$ on $\hks{k}$. These initial numerical results suggest that $\lim_{k\to\infty} \lambda_{\min,2k-1}=0$ for $0<|t|<1$, which agrees with our final result.
    \begin{figure}[h]
        \begin{center}
        	\includegraphics[scale=.25]{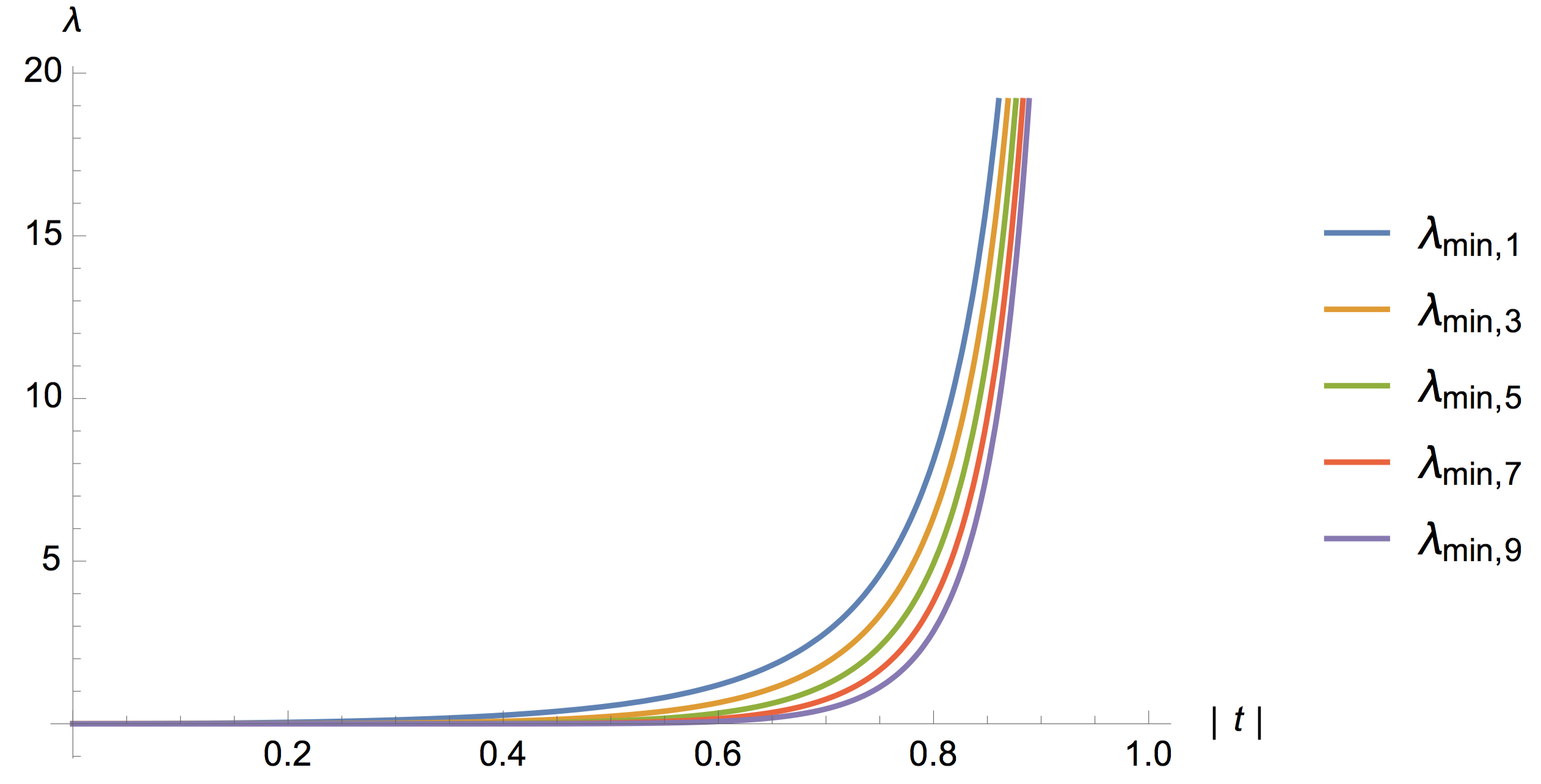}
        	\caption{Smallest Non-Zero Eigenvalues for $k=1,3,5,7,9$.}
	\label{fig:plot}
        \end{center}
    \end{figure}
    

    \section{Invariant Subspaces of $\hks{2k-1}$ under $\boxbt$}
    
In this section we fix $k\geq 1$ and work on $\hks{2k-1}$.    As we have seen, $\boxbt$ can be expanded in the following way:
	\begin{align}
		\boxbt &= -(\l + \conj{t} \lbar)\frac{1 + |t|^2}{(1- |t|^2)^2}(\lbar + t \l) \nonumber \\
		&= -h(\l \lbar + |t|^2 \lbar \l + t \l^2 + \conj{t} \lbar^2) \label{boxbtexpansion}
	\end{align}
	This is because of the linearity of $\l$ and $\lbar$. Now, we need the following property.
	\begin{lemma} 
	    \label{lbarsigmaorthogonal}
		If $\langle f_i, f_j \rangle=0$ for $i\not=j$ and $f_i,f_j\in \hpqs{0}{2k-1}$, then $\langle \lbar^{\sigma} f_i,  \lbar^{\sigma} f_j \rangle=0$ for $0\leq \sigma \leq 2k-1$.
	\end{lemma}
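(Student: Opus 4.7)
The plan is induction on $\sigma$. The base case $\sigma=0$ is exactly the hypothesis. For the inductive step I would establish the recursion
\[
\langle \lbar^\sigma f_i,\lbar^\sigma f_j\rangle \;=\; \sigma(2k-\sigma)\,\langle \lbar^{\sigma-1} f_i,\lbar^{\sigma-1} f_j\rangle, \qquad 1\leq \sigma\leq 2k-1,
\]
after which iterating down to $\sigma=0$ and invoking $\langle f_i,f_j\rangle=0$ finishes the argument.

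To produce the recursion I would first move a $\lbar$ across the inner product. On $L^2(\3)$ the adjoint of $\lbar$ is $-\l$; this is consistent with the self-adjointness of $\boxb=-\l\lbar$ exploited in the introduction, and follows from the fact that $\l$ and $\lbar$ are left-invariant complex vector fields on $\3\cong SU(2)$, hence divergence-free with respect to the bi-invariant surface measure. Using this,
\[
\langle \lbar^\sigma f_i,\lbar^\sigma f_j\rangle \;=\; \bigl\langle \lbar^{\sigma-1} f_i,\, -\l\lbar\,(\lbar^{\sigma-1} f_j)\bigr\rangle \;=\; \bigl\langle \lbar^{\sigma-1} f_i,\, \boxb\,(\lbar^{\sigma-1} f_j)\bigr\rangle.
\]

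Next I would identify the bidegree of $\lbar^{\sigma-1} f_j$ so that Theorem~\ref{boxbeigenvalue} applies. A short commutator computation gives $[\Delta,\lbar]=0$, so $\lbar$ sends harmonic polynomials on $\C^2$ to harmonic polynomials; since $\lbar$ shifts bidegree by $(1,-1)$, iterating yields $\lbar^{\sigma-1} f_j\in\hpqs{\sigma-1}{2k-\sigma}$. Theorem~\ref{boxbeigenvalue} then says that $\boxb$ acts on this space as the scalar $(\sigma-1)(2k-\sigma)+(2k-\sigma)=\sigma(2k-\sigma)$, which gives the recursion and closes the induction by the hypothesis $\langle \lbar^{\sigma-1} f_i,\lbar^{\sigma-1} f_j\rangle=0$.

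The main subtlety is justifying $\lbar^*=-\l$ on $L^2(\3)$; after that, everything reduces to bookkeeping with bidegrees and the eigenvalues from Theorem~\ref{boxbeigenvalue}. I would handle the adjoint identity via the $SU(2)$-invariance argument above, or equivalently by a direct integration-by-parts computation in coordinates on $\3$.
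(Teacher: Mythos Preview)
Your approach is correct and essentially identical to the paper's: induct on $\sigma$, move one copy of $\lbar$ across the inner product via the adjoint relation $\lbar^{*}=-\l$, and then apply Theorem~\ref{boxbeigenvalue} to $\lbar^{\sigma-1}f_j\in\hpqs{\sigma-1}{2k-\sigma}$ to reduce to the inductive hypothesis. If anything you are more careful than the paper, explicitly justifying $\lbar^{*}=-\l$ and the harmonicity of $\lbar^{\sigma-1}f_j$, and your eigenvalue constant $\sigma(2k-\sigma)$ is the correct one (the paper's stated bidegree $\hpqs{\sigma-1}{2k-1-\sigma-1}$ contains a harmless index slip).
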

	\begin{proof} 
		Choose $f_i$ and $f_j$, $i\not=j$ from an orthogonal basis for $\hpqs{0}{2k-1}$. We show that for $0\leq \sigma\leq 2k-1$, $\lbar^{\sigma} f_1$ and $\lbar^{\sigma} f_j$ are orthogonal. To do this we use induction on $\sigma$. Suppose $\langle \lbar^{\sigma-1} f_i, \lbar^{\sigma-1} f_j \rangle =0$, and we show that $\langle \lbar^{\sigma} f_i, \lbar^{\sigma} f_j \rangle =0$. Note that,
		\begin{align*} 
			\langle \lbar^{\sigma} f_i, \lbar^{\sigma} f_j \rangle &=\langle \lbar^{\sigma-1} f_i, \l \lbar^{\sigma} f_j\rangle \\
			&=\langle \lbar^{\sigma-1} f_i, (\l \lbar)\lbar^{\sigma-1} f_j\rangle \\
			&=\langle \lbar^{\sigma-1} f_i, -\boxb \lbar^{\sigma-1} f_j\rangle.
		\end{align*}
		However since $\lbar^{\sigma-1} f_j\in \hpqs{\sigma-1}{2k-1-\sigma-1}$, we know that $\boxb \lbar^{\sigma-1} f_j = (\sigma)(2k-\sigma-2)\lbar^{\sigma-1}f_j$. Therefore, 
		\begin{align*} 
			\langle \lbar^{\sigma-1} f_i, -\boxb \lbar^{\sigma-1} f_j\rangle&=\langle \lbar^{\sigma-1} f_i,  -(\sigma)(2k-\sigma-2)\lbar^{\sigma-1}f_j\rangle \\
			&=-(\sigma)(2k-\sigma-2) \langle \lbar^{\sigma-1} f_i,  \lbar^{\sigma-1}f_j\rangle \\
			&= 0,
		\end{align*}
		by our induction hypothesis as desired. 
	\end{proof}
	With this, we first note that if $\{f_0, \dotsc, f_{2k-1}\}$ is an orthogonal basis for $\hpqs{0}{2k-1}$, then $\{\lbar^\sigma f_0, \dotsc, \lbar^\sigma f_{2k-1}\}$ is an orthogonal basis for $\hpqs{\sigma}{2k-1-\sigma}$. Now, we define the following subspaces of $\hks{2k-1}$.
	\begin{definition}
		\label{defoddvw}
		Suppose $\{f_0, \dotsc, f_{2k-1}\}$ is the orthogonal basis for $\hpqs{0}{2k-1}$. Then we define
		\begin{gather*}
			V_i = \Span\{f_i, \lbar^2 f_i,\ldots, \lbar^{2j-2} f_i, \dotsc, \lbar^{2k-2} f_i\}, \\
			W_i = \Span\{\lbar f_i, \lbar^3 f_i, \ldots, \lbar^{2j-1} f_i, \dotsc, \lbar^{2k-1} f_i\}.
		\end{gather*}
	\end{definition}
Denote the basis elements of $V_i$ by $v_{i,1}, \dotsc, v_{i,k}$ and for $W_i$ by $w_{i,1}, \dotsc, w_{i,k}$.  
We first note that since each bidegree space $\hpqs{p}{q} \subseteq \hks{2k-1}$ has $2k$ elements, we have $2k\; V_i$ spaces and $2k\; W_i$ spaces. We now note the following fact.

	\begin{theorem}
		$\displaystyle \bigoplus_{i=0}^{2k-1} V_i \oplus W_i = \hks{2k-1}$.
	\end{theorem}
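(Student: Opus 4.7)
The plan is to prove the decomposition by a dimension count backed up by mutual orthogonality of the generating vectors. By Theorem \ref{sizehk}, $\dim \hks{2k-1} = (2k)^2 = 4k^2$. On the other side, each $V_i$ is spanned by $k$ vectors $\lbar^{2j-2} f_i$ (for $j=1,\dots,k$) sitting in bidegree subspaces $\hpqs{2j-2}{2k-2j+1}$, and each $W_i$ is spanned by $k$ vectors $\lbar^{2j-1} f_i$ sitting in bidegree subspaces $\hpqs{2j-1}{2k-2j}$. These two families of bidegrees are disjoint, so $V_i\cap W_i=\{0\}$ and $\dim(V_i\oplus W_i)=2k$. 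Summing over the $2k$ values $i=0,\dots,2k-1$ gives the matching count $4k^2$.

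Next I would show that the full collection $\{\lbar^\sigma f_i : 0\leq i\leq 2k-1,\ 0\leq \sigma\leq 2k-1\}$ is an orthogonal set of nonzero vectors in $\hks{2k-1}$. If $\sigma_1=\sigma_2$ and $i\neq j$, orthogonality is exactly Lemma \ref{lbarsigmaorthogonal}. If $\sigma_1\neq \sigma_2$, then $\lbar^{\sigma_1}f_i\in \hpqs{\sigma_1}{2k-1-\sigma_1}$ and $\lbar^{\sigma_2}f_j\in \hpqs{\sigma_2}{2k-1-\sigma_2}$ live in distinct bidegree subspaces, which are orthogonal in $L^2(\3)$: under the $U(1)$ action $z\mapsto e^{i\theta}z$, a function in $\hpq{p}{q}{\2}$ is multiplied by $e^{i(p-q)\theta}$, and $p-q=2\sigma-(2k-1)$ separates distinct $\sigma$'s, so $U(1)$-invariance of the inner product forces the bracket to vanish. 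Each vector $\lbar^\sigma f_i$ is nonzero, as recorded right after Lemma \ref{lbarsigmaorthogonal}: $\{\lbar^\sigma f_0,\dots,\lbar^\sigma f_{2k-1}\}$ is an orthogonal basis of $\hpqs{\sigma}{2k-1-\sigma}$.

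From here the theorem closes quickly: the $4k^2$ vectors are orthogonal (hence linearly independent) inside a space of the same dimension $4k^2$, so they form a basis of $\hks{2k-1}$. In particular, the $2k$ sums $V_i\oplus W_i$ are linearly independent of each other, their span is all of $\hks{2k-1}$, and the direct-sum decomposition is genuine.

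The main obstacle I anticipate is simply bookkeeping: verifying that the even exponents in $V_i$ together with the odd exponents in $W_i$ exhaust all $2k$ bidegree subspaces $\hpqs{\sigma}{2k-1-\sigma}$ with $0\leq \sigma\leq 2k-1$, and citing (or briefly justifying) orthogonality of distinct bidegree subspaces of $L^2(\3)$. No deeper computation is required beyond what Lemma \ref{lbarsigmaorthogonal} already supplies.
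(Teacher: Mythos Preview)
Your proposal is correct and follows essentially the same line as the paper's proof: both arguments rest on the facts that $\hks{2k-1}$ decomposes into the bidegree pieces $\hpqs{\sigma}{2k-1-\sigma}$ and that $\{\lbar^\sigma f_0,\dots,\lbar^\sigma f_{2k-1}\}$ is a basis of each such piece (the remark following Lemma~\ref{lbarsigmaorthogonal}). The paper simply rearranges the resulting double direct sum $\bigoplus_\sigma\bigoplus_i \langle \lbar^\sigma f_i\rangle$ by swapping the order of summation and then splitting off even and odd $\sigma$, whereas you reach the same conclusion via an orthogonality-plus-dimension count; your explicit $U(1)$ argument for the orthogonality of distinct bidegree subspaces is a detail the paper leaves implicit in its use of Theorem~\ref{pkissumppq}.
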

	\begin{proof}
		We first note that by Theorem \ref{pkissumppq},
		\begin{align*}
		    \hks{2k-1} &= \bigoplus_{i=0}^{2k-1} \hpqs{i}{2k-1-i}
		\intertext{but by Lemma \ref{lbarsigmaorthogonal}, we see that this is really just}
	        &= \bigoplus_{i=0}^{2k-1} \lbar^i f_0 \oplus \dotsb \oplus \lbar^i f_{2k-1}. 
		\intertext{Manipulating this, we have}
		    &= \bigoplus_{i=0}^{2k-1} f_i \oplus \lbar f_i \dotsb \oplus \lbar^{2k-1} f_i \\
		    &= \bigoplus_{i=0}^{2k-1} f_i \oplus \lbar^2 f_i \oplus \dotsb \oplus \lbar^{2k-2} f_i \oplus \lbar f_i \oplus \lbar^3 f_i \oplus \dotsb \oplus \lbar^{2k-1} f_i \\
		    &= \bigoplus_{i=0}^{2k-1} V_i \oplus W_i, 
		\end{align*}
	    which is our goal.
	\end{proof}
	The advantage of constructing these spaces in the first place is due to the following fact.
	\begin{theorem}
		\label{vwinvariant}
		$\boxbt$ is invariant on $V_i$ and $W_i$.
	\end{theorem}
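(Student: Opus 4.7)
\textit{Proof plan.} The strategy is to verify that each of the four operators in the decomposition
\[
\boxbt = -h\left(\l\lbar + |t|^2\lbar\l + t\l^2 + \conj{t}\lbar^2\right)
\]
from \eqref{boxbtexpansion} preserves $V_i$ (and, by an identical argument, $W_i$). Since these spaces are spanned by elements of the form $\lbar^\sigma f_i$ for even and odd $\sigma$ respectively, it suffices to show that each of the four operators sends $\lbar^\sigma f_i$ into $\Span\{\lbar^{\sigma-2}f_i,\,\lbar^\sigma f_i,\,\lbar^{\sigma+2}f_i\}$, with the natural truncations at the endpoints.

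I would first handle the diagonal operators $\l\lbar$ and $|t|^2\lbar\l$. Since $\lbar^\sigma f_i \in \hpqs{\sigma}{2k-1-\sigma}$, Theorem \ref{boxbeigenvalue} and its $\lbar\l$-analog recorded just after it give that both operators act as scalars on this whole bidegree space. Consequently these two terms simply multiply $\lbar^\sigma f_i$ by a constant and therefore keep $V_i$ and $W_i$ invariant. The $\conj{t}\lbar^2$ term is also immediate by construction: $\lbar^2(\lbar^\sigma f_i) = \lbar^{\sigma+2}f_i$, which is another basis vector of the appropriate subspace, unless $\sigma + 2 > 2k-1$, in which case it vanishes because $\lbar^{2k-1}f_i \in \hpqs{2k-1}{0}$ has no $\conj{z}$-dependence and is annihilated by $\lbar$.

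The main obstacle is the $t\l^2$ term, whose image lives in $\hpqs{\sigma-2}{2k+1-\sigma}$ and a priori could mix the $f_i$-component with $f_j$-components for $j \neq i$. The crucial intermediate identity I plan to establish is
\[
\l(\lbar^\sigma f_i) = -\sigma(2k-\sigma)\,\lbar^{\sigma-1}f_i \qquad \text{for every } \sigma \geq 1,
\]
proved by regrouping $\l\lbar^\sigma = (\l\lbar)\lbar^{\sigma-1}$ and invoking Theorem \ref{boxbeigenvalue} on the bidegree space $\hpqs{\sigma-1}{2k-\sigma}$ that contains $\lbar^{\sigma-1}f_i$. The key point is that $\l\lbar$ is a \emph{genuine scalar} on this bidegree space, so it cannot spread $\lbar^{\sigma-1}f_i$ across the other $\lbar^{\sigma-1}f_j$'s. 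Iterating the identity once more yields $\l^2(\lbar^\sigma f_i) \in \Span\{\lbar^{\sigma-2}f_i\}$, with the convention that this equals $0$ when $\sigma \leq 1$, since $\l f_i = 0$ as $\hpqs{-1}{2k}=\{0\}$. Combining the four cases gives the desired invariance of $V_i$ and $W_i$ under $\boxbt$.
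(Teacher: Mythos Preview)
Your proposal is correct and follows essentially the same route as the paper: both use the expansion \eqref{boxbtexpansion}, note that $\l\lbar$ and $\lbar\l$ act as scalars on bidegree spaces via Theorem~\ref{boxbeigenvalue}, and reduce the $\l^2$ term by regrouping $\l\lbar^\sigma=(\l\lbar)\lbar^{\sigma-1}$ to show it produces a multiple of $\lbar^{\sigma-2}f_i$. Your treatment is slightly more explicit about the endpoint cases and about why $\l^2$ cannot mix the $f_i$ with other $f_j$, but the argument is the same.
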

	\begin{proof}
		By equation \eqref{boxbtexpansion}, we have that
		\[
			\boxbt = -h(\l \lbar + |t|^2 \lbar \l + t \l^2 + \conj{t} \lbar^2)
		\]
		Since the fraction in front is a constant, we can ignore it and only consider the expression in the parentheses.
		Let $ f \in \hpqs{0}{2k-1} $, and define $v_\sigma = \lbar^\sigma f$ to be a basis element of either $V_i$ or $W_i$, since they have the same form. We first note that $ v_\sigma \in \hpqs{\sigma}{2k-1-\sigma} $. Then by our expansion we have that
		\[
			\boxbt v_\sigma = -h(\l \lbar v_\sigma + |t|^2 \lbar \l v_\sigma + t \l^2 v_\sigma + \conj{t} \lbar^2 v_\sigma)
		\]
		We already know $\l \lbar v_\sigma$ and $\lbar \l v_\sigma$ will simply be a multiple of $v_\sigma$, so we consider $\l^2 v_\sigma$ and $\lbar^2 v_\sigma$.
		\begin{subequations}
			\begin{align}
				\l^2 v_\sigma &= \l^2 \lbar^{\sigma} f \nonumber\\
				&= \l\left[\l \lbar \left[\lbar^{\sigma-1} f\right]\right] \nonumber\\
				&= -(\sigma)(2k - \sigma)\l \lbar \left[\lbar^{\sigma-2} f\right] \nonumber\\
				&= (\sigma)(\sigma - 1)(2k + 1 - \sigma)(2k - \sigma)\lbar^{\sigma-2} f \nonumber\\
				&= (\sigma)(\sigma - 1)(2k + 1 - \sigma)(2k - \sigma) v_{\sigma-2} \label{l2basiselem} \\
				~
				\lbar^2 v_\sigma &= \lbar^2 \left[\lbar^{\sigma} f \right] \nonumber\\
				&= \lbar^{\sigma+2} f \nonumber\\
				&= v_{\sigma+2} \label{lbar2basiselem}
			\end{align}
		\end{subequations}
		so we get multiples of $v_{\sigma-2}$ and $v_{\sigma+2}$. Relating this back to \(V_i\) and \(W_i\), we see that if $ \sigma = 2j-2 $, then $ \l^2 v_{i,j} $ is a multiple of $ v_{i,{j-1}} $, and $ \lbar^2 v_{i,j} $ is a multiple of $ v_{i,{j+1}} $. If $\sigma = 2j-1$, we get a similar result for $ w_{i,j} $. So we indeed have that $\boxbt$ is invariant on $V_i$ and $W_i$, and we are done.
	\end{proof}
	In light of this fact, we can consider $\boxbt$ not on the whole space $L^2(\3)$ or $\hks{2k-1}$, but rather on these $V_i$ and $W_i$ spaces. In fact, we actually have a representation of $\boxbt$ on these spaces.
	\begin{theorem}
		\label{oddtridiagonal}
		The matrix representation of $\boxbt$, $m(\boxbt)$, on $V_i$ and $W_i$ is tridiagonal, where $m(\boxbt)$ on $V_i$ is
		\[
			m(\boxbt) = h \begin{pmatrix}
				d_1 & u_1 &  &  & \phantom{\ddots} \\
				-\conj{t} & d_2 & u_2 &  & \phantom{\ddots} \\
				 & -\conj{t} & d_3 & \ddots & \\
				 &  & \ddots & \ddots & u_{k-1} \\
				\phantom{\ddots} &  &  & -\conj{t} & d_k
			\end{pmatrix}
		\]
		where $u_j = -t \cdot (2j)(2j-1)(2k-2j)(2k-1-2j)$ and $d_j = (2j-1)(2k+1-2j) + |t|^2 \cdot (2j-2)(2k+2-2j)$. For $W_i$, we get something similar:
		\[
			m(\boxbt) = h \begin{pmatrix}
			d_1 & u_1 &  &  & \phantom{\ddots} \\
			-\conj{t} & d_2 & u_2 &  & \phantom{\ddots} \\
			& -\conj{t} & d_3 & \ddots & \\
			&  & \ddots & \ddots & u_{k-1} \\
			\phantom{\ddots} &  &  & -\conj{t} & d_k
			\end{pmatrix}
		\]
		where $u_j = -t \cdot (2j+1)(2j)(2k-2j)(2k-1-2j)$ and $d_j = (2j)(2k-2j) + |t|^2 \cdot (2j-1)(2k+1-2j)$.
	\end{theorem}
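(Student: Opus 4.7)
The proof is a direct computation: I would apply the four-term expansion $\boxbt = -h(\l\lbar + |t|^2 \lbar\l + t\l^2 + \conj t\,\lbar^2)$ from \eqref{boxbtexpansion} to each basis vector of $V_i$ (resp.\ $W_i$), and then read off the matrix columns.

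All four ingredients needed are already assembled. Writing the generic basis element as $v_\sigma = \lbar^\sigma f_i$, which lies in $\hpqs{\sigma}{2k-1-\sigma}$, Theorem \ref{boxbeigenvalue} together with the companion identity $-\lbar\l g = (pq+p)g$ for $g\in\hpqs{p}{q}$ (noted after the theorem) gives the diagonal actions
\[
\l\lbar\, v_\sigma = -(\sigma+1)(2k-1-\sigma)\,v_\sigma, \qquad \lbar\l\, v_\sigma = -\sigma(2k-\sigma)\,v_\sigma,
\]
while equations \eqref{l2basiselem} and \eqref{lbar2basiselem} give the off-diagonal actions $\l^2 v_\sigma = \sigma(\sigma-1)(2k+1-\sigma)(2k-\sigma)\,v_{\sigma-2}$ and $\lbar^2 v_\sigma = v_{\sigma+2}$. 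For the $V_i$ matrix I then specialize to $\sigma = 2j-2$ and substitute; the coefficient of $v_{i,j}$ in $\boxbt v_{i,j}$ collects the $\l\lbar$ and $|t|^2\lbar\l$ contributions and gives $hd_j$, the coefficient of $v_{i,j-1}$ comes solely from the $t\l^2$ term, and the coefficient of $v_{i,j+1}$ comes solely from the $\conj t\lbar^2$ term and equals $-h\conj t$ since the $\lbar^2$-coefficient is $1$. Reinterpreting ``coefficient of $v_{i,j-1}$ in $\boxbt v_{i,j}$'' as the matrix entry in row $j-1$, column $j$ produces the claimed $u_{j-1}$ after an index shift $j\mapsto j+1$. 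The $W_i$ case is identical with $\sigma = 2j-1$.

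To confirm that the matrix is truly tridiagonal and not merely banded, I would check the two endpoints. At $j=1$ we have $\sigma(\sigma-1)=0$ in the $\l^2$-formula, so $\l^2 v_{i,1}=0$ and no entry appears below row $1$; at $j=k$ we have $v_{i,k+1}=\lbar^{2k}f_i \in \hpqs{2k}{-1}=\{0\}$, so no entry appears above row $k$. The analogous checks for $w_{i,1}$ and $w_{i,k}$ work in the same way.

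The main obstacle is purely clerical bookkeeping: aligning the two different parametrizations (basis index $j\in\{1,\dots,k\}$ versus $\lbar$-exponent $\sigma$), keeping straight which matrix entry corresponds to the coefficient of $v_{i,j\pm 1}$ in $\boxbt v_{i,j}$, and simplifying the resulting polynomials in $j,k$ to match the stated $u_j$ and $d_j$. Once the index conventions are fixed there is no real content beyond substitution; in particular, the asymmetry between the clean subdiagonal $-\conj t$ and the polynomial superdiagonal $u_j$ is simply a reflection of the fact that $\lbar^2$ acts trivially on the basis while $\l^2$ carries the four-factor polynomial coefficient from \eqref{l2basiselem}.
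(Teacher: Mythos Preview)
Your proposal is correct and follows essentially the same route as the paper: apply the four-term expansion \eqref{boxbtexpansion} to each basis vector $\lbar^\sigma f_i$, use Theorem~\ref{boxbeigenvalue} and its companion for the diagonal pieces and \eqref{l2basiselem}--\eqref{lbar2basiselem} for the off-diagonal pieces, then specialize $\sigma=2j-2$ or $\sigma=2j-1$ and read off $d_j$, $u_j$. Your explicit endpoint checks at $j=1$ and $j=k$ are a small addition the paper leaves implicit, but otherwise the arguments coincide.
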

	We note that the above definitions don't depend on $i$; in other words, each of these matrices are the same on $V_i$ and $W_i$, regardless of the choice of $i$.
	\begin{proof}
		Using equations \eqref{l2basiselem} and \eqref{lbar2basiselem}, along with Theorem \ref{boxbeigenvalue}, we can entirely describe the action of each piece of $\boxbt$ on a basis element $v_{i,j}$ or $w_{i,j}$:
		\begin{align*}
			-\l \lbar v_{i,j} &= (2j-1)(2k+1-2j)v_{i,j} & -\l \lbar w_{i,j} &= (2j)(2k-2j)w_{i,j} \\
			-\lbar \l v_{i,j} &= (2j-2)(2k+2-2j)v_{i,j}& -\lbar \l w_{i,j} &= (2j-1)(2k+1-2j)w_{i,j} \\
			\begin{split}
				-\l^2 v_{i,j} &= -(2j-2)(2j-3) \\
				&\hspace{2.1em}(2k+3-2j)(2k+2-2j)v_{i,{j-1}}
			\end{split} & 
			\begin{split}
			-\l^2 w_{i,j} &= -(2j-1)(2j-2) \\
			&\hspace{2.1em}(2k+2-2j)(2k+1-2j)w_{i,{j-1}}
			\end{split} \\
			-\lbar^2 v_{i,j}&= -v_{i,{j+1}} & -\lbar^2 w_{i,j} &= -w_{i,{j+1}}.
		\end{align*}
		By looking at it this way, we notice the tridiagonal structure. So with these observations, we can state that
		\begin{align*}
			\begin{split}
				\boxbt v_{i,j} = h \big(&-t \cdot (2j-2)(2j-3)(2k+3-2j)(2k+2-2j)v_{i,{j-1}} \\
				 &+ \left((2j-1)(2k+1-2j) + |t|^2 \cdot (2j-2)(2k+2-2j)\right)\!v_{i,j} \\ 
				 &-\conj{t} \cdot v_{i,{j+1}} \,\big)
			\end{split}& \\
			\begin{split}
				\boxbt w_{i,j} = h \big(&-t \cdot (2j-1)(2j-2)(2k+2-2j)(2k+1-2j) w_{i,{j-1}} \\
				&+ \left((2j)(2k-2j) + |t|^2 \cdot (2j-1)(2k-1-2j)\right)\!w_{i,j} \\ 
				&-\conj{t} \cdot w_{i,{j+1}} \,\big).
			\end{split}& 
		\end{align*}
		Now that we have this formula, we can find $m(\boxbt)$ on $V_i$ and $W_i$ by computing their effect on the basis vectors $v_{i,j}$ and $w_{i,j}$: when we do this for $V_i$, we get
		\begin{align*}
			d_j &= (2j-1)(2k+1-2j) + |t|^2 \cdot (2j-2)(2k+2-2j) \\
			u_{j-1}& = -t \cdot (2j-2)(2j-3)(2k+3-2j)(2k+2-2j)  \\
			&\implies u_j = -t \cdot (2j)(2j-1)(2k-2j)(2k-1-2j)  \\
		\end{align*}
		and for $W_i$, we get
		\begin{align*}
			d_j &= (2j)(2k-2j) + |t|^2 \cdot (2j-1)(2k-1-2j) \\
			u_{j-1} & = -t \cdot (2j-1)(2j-2)(2k+2-2j)(2k+1-2j) \\
			&\implies u_j = -t \cdot (2j+1)(2j)(2k-2j)(2k-1-2j).
		\end{align*}
		Finally, by factoring out $h$ and simply substituting each portion in we obtain the matrix representations above.
	\end{proof}
	An immediate consequence of this is that each $V_i$ subspace contributes the same set of eigenvalues to the spectrum of $\boxbt$, and similarly for each $W_i$. 
	Furthermore, we note that the matrices are rank $k$.
	Since the choice of $i$ does not change $m(\boxbt)$ on these spaces, we will fix an arbitrary $i$ and call the spaces $V$ and $W$ instead. 
	

    \section{Bottom of the Spectrum of $\boxbt$} 
Now that we have a matrix representation for \(\boxbt\) on these \(V\) and \(W\) spaces inside $\hks{2k-1}$, we can begin to analyze their eigenvalues as $k$ varies. First, we go over some facts about tridiagonal matrices.
	\begin{theorem}
		\label{tridiagonalsymmetric}
		Suppose $A$ is a tridiagonal matrix,
		\[
			A = \begin{pmatrix}
			d_1 & u_1 &  &  & \phantom{\ddots} \\
			l_1 & d_2 & u_2 &  & \phantom{\ddots} \\
			& l_2 & d_3 & \ddots & \\
			&  & \ddots & \ddots & u_{k-1} \\
			\phantom{\ddots} &  &  & l_{k-1} & d_k
			\end{pmatrix}
		\]
		and the products \(u_i l_i > 0\) for $1\leq i\leq k$, then $A$ is similar to a symmetric tridiagonal matrix.
	\end{theorem}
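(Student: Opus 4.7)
The plan is to produce an explicit diagonal matrix $D$ such that $D A D^{-1}$ is symmetric. Since conjugation by a diagonal matrix preserves the diagonal entries and rescales off-diagonal entries in a controlled way, it is the natural tool here: it keeps the tridiagonal shape and lets us equate the super- and sub-diagonal entries one pair at a time.

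Concretely, I would write $D = \mathrm{diag}(\delta_1, \ldots, \delta_k)$ with $\delta_i > 0$ to be chosen, and compute that $(DAD^{-1})_{i,i+1} = (\delta_i/\delta_{i+1}) u_i$ and $(DAD^{-1})_{i+1,i} = (\delta_{i+1}/\delta_i) l_i$, while the diagonal is unchanged. Demanding symmetry gives the recursion
\[
    \left(\frac{\delta_{i+1}}{\delta_i}\right)^{\!2} = \frac{u_i}{l_i},
\]
for $1 \le i \le k-1$. The hypothesis $u_i l_i > 0$ is exactly what is needed here: it forces $u_i$ and $l_i$ to have the same sign, so $u_i/l_i > 0$ and the positive square root is a real number. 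Setting $\delta_1 = 1$ and $\delta_{i+1} = \delta_i \sqrt{u_i/l_i}$ defines $D$ unambiguously as a real invertible diagonal matrix.

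With this choice, both off-diagonal entries at position $(i,i+1)$ and $(i+1,i)$ become $\sqrt{u_i l_i}$, so $DAD^{-1}$ is a real symmetric tridiagonal matrix similar to $A$. There is essentially no obstacle in the argument; the only subtlety worth spelling out is why the hypothesis $u_i l_i > 0$ (rather than merely $u_i l_i \ne 0$) is the right one — it is precisely what makes the scaling factors $\sqrt{u_i/l_i}$ real, so that the similarity can be realized by a real diagonal matrix and the resulting symmetric matrix has real entries. (If one only had $u_i l_i \ne 0$, one could still symmetrize by allowing complex $\delta_i$, but the conclusion of genuine real symmetry — and hence real spectrum, useful in the next section — would be lost.)
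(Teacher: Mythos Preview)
Your proof is correct and is essentially the same as the paper's: both construct the explicit diagonal conjugator with $i$th entry $\prod_{j<i}\sqrt{u_j/l_j}$ and observe that the resulting off-diagonal entries are $\pm\sqrt{u_i l_i}$. Your write-up is in fact a bit more careful than the paper's, since you derive the recursion for the $\delta_i$ and explain why the hypothesis $u_i l_i>0$ (not merely $u_i l_i\neq 0$) is needed to keep the conjugator real.
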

	\begin{proof}
		One can verify that if
		\[
			S = \begin{pmatrix}
			1 &  &  &  & \phantom{\ddots} \\
			 & \sqrt{\frac{u_1}{l_1}} & & & \phantom{\ddots} \\
			 & & \sqrt{\frac{u_1 u_2}{l_1 l_2}} & & \phantom{\ddots} \\
			 & & & \ddots & \\
			\phantom{\ddots} &  &  &  & \sqrt{\frac{u_1 \dotsb u_{k-1}}{l_1 \dotsb l_{k-1}}}
			\end{pmatrix}
		\]
		then $A = SBS^{{-1}}$, where
		\[
			B = \begin{pmatrix}
			d_1 & \sqrt{u_1 l_1} &  &  & \phantom{\ddots} \\
			\sqrt{u_1 l_1} & d_2 & \sqrt{u_2 l_2} &  & \phantom{\ddots} \\
			& \sqrt{u_2 l_2} & d_3 & \ddots & \\
			&  & \ddots & \ddots & \sqrt{u_{k-1} l_{k-1}} \\
			\phantom{\ddots} &  &  & \sqrt{u_{k-1} l_{k-1}} & d_k
			\end{pmatrix}
		\]
		Therefore, $A$ is similar to a symmetric tridiagonal matrix.
	\end{proof}
	Another special property of tridiagonal matrices is the continuant.
	\begin{definition}
		\label{continuant}
		Let $A$ be a tridiagonal matrix, like the above. Then we define the \textit{continuant} of $A$ to be a recursive sequence: $f_1 = d_1$, and $f_{i} = d_{i-1} f_{i-1} - u_{i-2} l_{i-2} f_{i-2}$, where $f_0 = 1$.
	\end{definition}
	The reason we define this is because $\det(A) = f_k$. In addition, if we denote $A_i$ to mean the square submatrix of $A$ formed by the first $i$ rows and columns, then $\det(A_i) = f_i$. \\
	With this background, we will now start analyzing $\boxbt$ on $W$. \\
	
	To get bounds on the eigenvalues, we will invoke the Cauchy interlacing theorem, see \cite{Cauchy}.
	\begin{theorem}
		\label{cauchyinterlacing}
		Suppose $A$ is an $n\times n$ Hermitian matrix of rank $n$, and $B$ is an $n-1 \times n-1$ matrix minor of $A$. If the eigenvalues of $A$ are $\lambda_1 \le \dotsb \le \lambda_n$ and the eigenvalues of $B$ are $\nu_1 \le \dotsb \le \nu_{n-1}$, then the eigenvalues of $A$ and $B$ interlace:
		\[
			0< \lambda_1 \le \nu_1 \le \lambda_2 \le \nu_2 \le \dotsb \le \lambda_{n-1} \le \nu_{n-1} \le \lambda_n
		\]
	\end{theorem}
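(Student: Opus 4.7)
The plan is to prove the interlacing via the Courant--Fischer min-max characterization of eigenvalues of a Hermitian matrix, then handle the positivity $0<\lambda_1$ separately. Recall that for an $n\times n$ Hermitian matrix $A$ with eigenvalues $\lambda_1\le\dotsb\le\lambda_n$, one has
\[
\lambda_k \;=\; \min_{\substack{S\subset\C^n\\ \dim S = k}}\;\max_{\substack{x\in S\\ x\ne 0}}\frac{x^*Ax}{x^*x}
\;=\;\max_{\substack{S\subset\C^n\\ \dim S = n-k+1}}\;\min_{\substack{x\in S\\ x\ne 0}}\frac{x^*Ax}{x^*x}.
\]
By relabeling rows and columns, I may assume without loss of generality that $B$ is obtained from $A$ by deleting the last row and column, so that any $y\in\C^{n-1}$ embeds as $\tilde y = (y,0)\in\C^n$ with $y^*By = \tilde y^* A \tilde y$ and $\|y\|=\|\tilde y\|$.

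First I would establish $\lambda_k\le\nu_k$ for each $1\le k\le n-1$: given any $k$-dimensional subspace $T\subset\C^{n-1}$, its embedding $\tilde T\subset\C^n$ is still $k$-dimensional and the Rayleigh quotients of $A$ on $\tilde T$ agree with those of $B$ on $T$. Taking the min over $T$ on the right and applying the min characterization for $\lambda_k$ on the left gives the inequality. Next I would establish $\nu_k\le\lambda_{k+1}$ by a dimension-counting argument: for any $(k+1)$-dimensional subspace $S\subset\C^n$, the intersection $S\cap\{x_n=0\}$ has dimension at least $(k+1)+(n-1)-n=k$, so it contains a $k$-dimensional subspace that, viewed inside $\C^{n-1}$, gives an upper bound
\[
\nu_k \;\le\; \max_{x\in S\cap\{x_n=0\}}\frac{x^*Ax}{x^*x} \;\le\; \max_{x\in S}\frac{x^*Ax}{x^*x},
\]
and minimizing over $S$ yields $\nu_k\le\lambda_{k+1}$. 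Chaining these two families of inequalities produces the full interlacing chain.

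Finally, for $0<\lambda_1$, I would note that in the present application the matrix $A$ is not merely a Hermitian matrix of full rank but arises as the matrix of $\boxbt$ restricted to an invariant subspace; since $\boxbt = -\l_t h\,\lbar_t$ (with $h>0$) is a nonnegative self-adjoint operator, its matrix is positive semidefinite, and the full rank assumption upgrades this to positive definiteness. Hence $\lambda_1>0$ follows from the Rayleigh quotient characterization. The main obstacle here is essentially bookkeeping---the interlacing proof is standard---so the real subtlety is just making sure that the rank/positivity hypothesis is invoked correctly when the theorem is later applied to the tridiagonal matrices of Theorem~\ref{oddtridiagonal}.
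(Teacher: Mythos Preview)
Your min--max argument is correct and is one of the standard proofs of Cauchy interlacing. Note, however, that the paper does not actually prove this theorem: it is quoted as a known result with a citation to Hwang~\cite{Cauchy}, so there is no ``paper's own proof'' to compare against. Your write-up therefore supplies a self-contained justification where the paper simply invokes the literature.

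One point worth flagging, which you already caught: the inequality $0<\lambda_1$ does \emph{not} follow from the hypotheses as stated (a full-rank Hermitian matrix can have negative eigenvalues), so strictly speaking the theorem as written is slightly overstated. You correctly observe that in the paper's application the matrix arises from the nonnegative operator $\boxbt$ on a finite-dimensional invariant subspace, so positive semidefiniteness together with full rank gives positive definiteness and hence $\lambda_1>0$. That is exactly the right fix, and it is the only place where the ``rank $n$'' hypothesis is doing any work.
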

	Now, we can get an intermediate bound on the smallest eigenvalue.
	\begin{theorem}
		\label{boundweigenvals}
		Suppose $A$ is a Hermitian matrix of rank $n$, and  $\lambda_1 \le \dotsb \le \lambda_n$ are its eigenvalues. Then
		\[
			\lambda_1 \le \frac{\det(A)}{\det(A_{k-1})}
		\]
		where $A_{k-1}$ is $A$ without the last row and column.
	\end{theorem}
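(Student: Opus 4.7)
The plan is to combine the Cauchy interlacing theorem (Theorem \ref{cauchyinterlacing}) with the fact that determinants of Hermitian matrices factor as products of eigenvalues. Let $\nu_1 \le \dotsb \le \nu_{n-1}$ denote the eigenvalues of the submatrix $A_{n-1}$ (I read the subscript $k-1$ in the statement as $n-1$, the principal minor of size one less than $A$). Because $A$ has rank $n$, it is invertible, so every $\lambda_i$ is nonzero, and by the interlacing theorem the $\nu_i$ are all strictly positive as well.

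The key identity is the factorization
\[
\frac{\det(A)}{\det(A_{n-1})} = \frac{\prod_{i=1}^n \lambda_i}{\prod_{i=1}^{n-1} \nu_i} = \lambda_1 \cdot \prod_{i=1}^{n-1} \frac{\lambda_{i+1}}{\nu_i}.
\]
Interlacing gives $\nu_i \le \lambda_{i+1}$ for each $i = 1, \dotsc, n-1$, so each factor $\lambda_{i+1}/\nu_i$ is at least $1$. Since all quantities are positive, multiplying these estimates yields
\[
\prod_{i=1}^{n-1} \frac{\lambda_{i+1}}{\nu_i} \ge 1,
\]
and substituting back gives $\det(A)/\det(A_{n-1}) \ge \lambda_1$, which is the claimed bound.

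There is no real obstacle here; the only point requiring care is the sign bookkeeping. One has to verify that the $\nu_i$ are positive before dividing by $\prod \nu_i$ and before concluding that each ratio $\lambda_{i+1}/\nu_i$ being $\ge 1$ is preserved under the product. Both of these are immediate from the positivity of $\lambda_1$ (coming from the full-rank hypothesis and the form of Theorem \ref{cauchyinterlacing}) together with the chain $0 < \lambda_1 \le \nu_1 \le \lambda_2 \le \dotsb \le \nu_{n-1} \le \lambda_n$. The result will then be applied to the tridiagonal matrices from Theorem \ref{oddtridiagonal} (after the similarity reduction to the Hermitian case afforded by Theorem \ref{tridiagonalsymmetric}), where $\det(A)$ and $\det(A_{n-1})$ are accessible via the continuant recursion of Definition \ref{continuant}.
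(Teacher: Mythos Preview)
Your proof is correct and follows essentially the same approach as the paper: both invoke Cauchy interlacing to obtain $0<\lambda_1\le\nu_1\le\lambda_2\le\dotsb\le\nu_{n-1}\le\lambda_n$, write the determinants as products of eigenvalues, and use $\nu_i\le\lambda_{i+1}$ together with positivity to conclude $\lambda_1\prod\nu_i\le\prod\lambda_i$. The only cosmetic difference is that the paper proves $\lambda_1\det(A_{n-1})\le\det(A)$ and then divides, whereas you factor the ratio as $\lambda_1\prod_{i}\lambda_{i+1}/\nu_i$ and bound each factor by $1$; these are the same argument.
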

	\begin{proof}
		Since $A_{k-1}$ is a $k-1 \times k-1$ matrix minor of $A$, we can apply the Cauchy interlacing theorem. If the eigenvalues of $A_{k-1}$ are $\nu_1 \le \dotsb \le \nu_{k-1}$, then
		\[
			\lambda_1 \le \nu_1 \le \lambda_2 \le \nu_2 \le \dotsb \le	\lambda_{n-1} \le \nu_{n-1} \le \lambda_n
		\]
		Now, we claim that
		\[
			\lambda_1 \det(A_{k-1}) \le \det(A)
		\]
		To see why this is true, first observe that the determinant of a matrix is simply the product of all its eigenvalues. In particular,
		\[
			\lambda_1 \det(A_{k-1}) = \lambda_1 \nu_1 \dotsb \nu_{k-1}
		\]
		But we can simply apply the Cauchy interlacing theorem: since $\nu_1 \le \lambda_2$, $\nu_2 \le \lambda_3$, and so on, we get
		\begin{align*}
			\lambda_1 \nu_1 \dotsb \nu_{k-1} &\le \lambda_1 \lambda_2 \dotsb \lambda_k \\
			&= \det(A)
		\end{align*}
		so the claim is proven. Now, dividing both sides by $\det A_{k-1}$,
		\[
			\lambda_1 \le \frac{\det(A)}{\det(A_{k-1})}
		\]
		as desired.
	\end{proof}

	Since $m(\boxbt)$ on $W$ satisfies the conditions of Theorem \ref{tridiagonalsymmetric}, we find it is similar to this Hermitian tridiagonal matrix:
	\[
	    A = \begin{pmatrix}
				a_1 + b_1|t|^2 & c_1|t| &  &  & \phantom{\ddots} \\
				c_1|t| & a_2 + b_2|t|^2 & c_2|t| &  & \phantom{\ddots} \\
				 & c_2|t| & a_3 + b_3|t|^2 & \ddots & \\
				 &  & \ddots & \ddots & c_{k-1}|t| \\
				\phantom{\ddots} &  &  & c_{k-1}|t| & a_k + b_k|t|^2 
		\end{pmatrix}
	\]
	where $a_i = (2i)(2k-2i)$, $b_i = (2i-1)(2k+1-2i)$, and $c_i = \sqrt{(2i+1)(2i)(2k-2i)(2k-1-2i)}$. Note that we are ignoring the constant $h$ for now, which we will add back later. If we can find $\det(A_i)$, then by Theorem \ref{boundweigenvals} we can get a closed form for the bound on the smallest eigenvalue. With the following lemma, this is possible:
	\begin{lemma}
	    \label{crossdiagonal}
	    $a_i b_{i+1} = c_i^2$
	\end{lemma}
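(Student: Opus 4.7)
The plan is a direct substitution and comparison; this lemma is a routine identity whose purpose is to collapse the tridiagonal $2\times 2$ minor products into a single clean expression so the continuant recursion telescopes nicely.

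First I would write out $b_{i+1}$ explicitly using the definition $b_j = (2j-1)(2k+1-2j)$, which gives $b_{i+1} = (2i+1)(2k-1-2i)$. Then I would form the product
\[
a_i\, b_{i+1} \;=\; (2i)(2k-2i)\cdot(2i+1)(2k-1-2i).
\]
On the other side, squaring the definition of $c_i$ erases the radical and yields
\[
c_i^2 \;=\; (2i+1)(2i)(2k-2i)(2k-1-2i).
\]
A term-by-term comparison shows the two products coincide, which establishes the identity.

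The main (and only) subtlety is bookkeeping of indices: one must be careful that $b$ is evaluated at $i+1$ rather than $i$, since it is precisely this shift that converts $(2i-1)(2k+1-2i)$ into the factor $(2i+1)(2k-1-2i)$ that matches the $c_i^2$ expression. Once that shift is handled the claim is immediate, and no induction, positivity argument, or appeal to the Cauchy interlacing machinery of Theorem \ref{cauchyinterlacing} is needed. The payoff will come in the next step, where this identity lets one compute $\det(A_i)$ in closed form and thereby apply Theorem \ref{boundweigenvals} to bound the smallest eigenvalue of $m(\boxbt)$ on $W$.
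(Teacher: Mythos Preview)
Your proof is correct and follows exactly the same approach as the paper: direct substitution of the formulas for $a_i$, $b_{i+1}$, and $c_i^2$, followed by term-by-term comparison. The paper's proof is essentially the one-line version of what you wrote.
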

	\begin{proof}
	   We can simply work through the formulas to figure this out: $a_i = (2i)(2k-2i)$, $b_{i+1} = (2i+1)(2k-1-2i)$, and $c_i^2 = (2i+1)(2i)(2k-2i)(2k-1-2i)$. The products clearly match up.
	\end{proof}
	
    \begin{theorem}
        \label{detoddwmatrix}
        The determinant of $A_i$ is
        \[
            \begin{split}
                \det(A_i) &= a_1 a_2 \dotsb a_{i-1} a_i \\
                &+ b_1 a_2 \dotsb a_{i-1} a_i |t|^2 \\
                &+ \dotsb \\
                &+ b_1 b_2 \dotsb b_{i-1} a_i |t|^{2i-2} \\
                &+ b_1 b_2 \dotsb b_{i-1} b_i |t|^{2i}
            \end{split}
        \]
    \end{theorem}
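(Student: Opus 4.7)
The plan is to prove the claim by induction on $i$, using the standard three-term recurrence for determinants of tridiagonal matrices (the continuant from Definition \ref{continuant}). Expanding $\det(A_i)$ along its last row gives
\[
\det(A_i) = (a_i + b_i|t|^2)\det(A_{i-1}) - c_{i-1}^2 |t|^2 \det(A_{i-2}),
\]
and by Lemma \ref{crossdiagonal} we have $c_{i-1}^2 = a_{i-1} b_i$. So the crucial recurrence is
\[
\det(A_i) = (a_i + b_i|t|^2)\det(A_{i-1}) - a_{i-1} b_i |t|^2 \det(A_{i-2}).
\]
My goal is to show that the polynomial
\[
P_i := \sum_{j=0}^{i} |t|^{2j}\,(b_1 b_2 \cdots b_j)(a_{j+1} a_{j+2} \cdots a_i)
\]
(with empty products equal to $1$) satisfies the same recurrence and base cases, so that $\det(A_i) = P_i$.

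First I would dispose of the base cases: for $i = 1$, $A_1 = (a_1 + b_1|t|^2) = P_1$, and for $i = 2$, a direct $2 \times 2$ expansion together with Lemma \ref{crossdiagonal} gives $\det(A_2) = a_1 a_2 + b_1 a_2 |t|^2 + b_1 b_2 |t|^4 = P_2$. For the inductive step I assume $\det(A_{i-1}) = P_{i-1}$ and $\det(A_{i-2}) = P_{i-2}$ and compute $(a_i + b_i|t|^2)P_{i-1} - a_{i-1} b_i |t|^2 P_{i-2}$. Multiplying each summand in $P_{i-1}$ by $a_i$ converts the trailing product $a_{j+1}\cdots a_{i-1}$ into $a_{j+1}\cdots a_i$, producing exactly the terms of $P_i$ indexed by $j = 0, 1, \ldots, i-1$.

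The remaining contribution from $b_i|t|^2 \cdot P_{i-1}$ consists of ``wrong-shape'' terms with a stray $b_i$ appended: re-indexing shows these are precisely
\[
b_i \sum_{j'=1}^{i} |t|^{2j'}\,(b_1\cdots b_{j'-1})(a_{j'} \cdots a_{i-1}).
\]
The $j' = i$ term here is $b_1 b_2 \cdots b_i |t|^{2i}$, which is exactly the missing $j = i$ summand of $P_i$. So the proof reduces to verifying that the leftover terms for $j' = 1, \ldots, i-1$ cancel against $a_{i-1} b_i |t|^2 P_{i-2}$. This cancellation is the key identity, and it is just a matter of re-indexing: after distributing, one finds
\[
a_{i-1} b_i |t|^2 P_{i-2} = b_i \sum_{j''=1}^{i-1} |t|^{2j''}\,(b_1\cdots b_{j''-1})(a_{j''}\cdots a_{i-1}),
\]
matching term by term.

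The main obstacle is purely bookkeeping: keeping the two index shifts (one from the $a_i$ product, one from the $b_i|t|^2$ product) aligned with the $a_{i-1}b_i|t|^2$ times $P_{i-2}$ expansion. Lemma \ref{crossdiagonal} is what makes the cancellation clean, and once one verifies the base case $i=2$ as a sanity check, the inductive step is a direct reorganization of sums with no further algebraic input.
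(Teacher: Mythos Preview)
Your proposal is correct and follows essentially the same approach as the paper: strong induction via the continuant recurrence, with Lemma \ref{crossdiagonal} supplying the identity $c_{i-1}^2 = a_{i-1}b_i$ that makes the cross-terms cancel. The only cosmetic differences are that the paper indexes the inductive step as $A_{i+1}$ from $A_i$ and $A_{i-1}$ (rather than $A_i$ from $A_{i-1}$ and $A_{i-2}$) and writes out the expanded products line by line instead of using your $P_i$ sigma-notation, but the algebra and the cancellation are identical.
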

    In each row, we replace a particular $a_j$ with $b_j$, and multiply by $|t|^2$. Note that if $i = k$, then $a_k = 0$ and all terms but the last term are 0.
    \begin{proof}
        We will prove this using strong induction on $i$. The base case is $i = 1$, where $\det(A_1) = a_1 + b_1 |t|^2$, which does indeed match up with our formula. Now, assume the formula works for $A_{i-1}$ and $A_i$. We need to show that the formula works for $A_{i+1}$. Using the formula for the continuant, we get
        \begin{align*}
            \det(A_{i+1}) &= (a_{i+1} + b_{i+1} |t|^2) \det(A_i) - c_i^2 |t|^2 \det(A_{i-1}) \\
            \intertext{Now, use Lemma \ref{crossdiagonal}:}
            &= (a_{i+1} + b_{i+1} |t|^2) \det(A_i) - a_i b_{i+1} |t|^2 \det(A_{i-1}) \\
        \end{align*}
        Now, we use our induction hypothesis: 
        \begin{alignat*}{2}
            &=&&\;(a_{i+1} + b_{i+1} |t|^2) (a_1 a_2 \dotsb a_i + b_1 a_2 \dotsb a_i |t|^2 + \dotsb + b_1 b_2 \dotsb b_i |t|^{2i}) \\
            &&&- a_i b_{i+1} |t|^2 (a_1 a_2 \dotsb a_{i-1} + b_1 a_2 \dotsb a_{i-1} |t|^2 + \dotsb + b_1 b_2 \dotsb b_{i-1} |t|^{2i-2}) \\
            &=&&\; a_1 a_2 \dotsb a_{i+1} + b_1 a_2 \dotsb a_{i+1} |t|^2 + \dotsb + b_1 b_2 \dotsb b_i a_{i+1} |t|^{2i} \\
            &&&+  a_1 a_2 \dotsb a_i b_{i+1}|t|^2 + b_1 a_2 \dotsb a_i b_{i+1}|t|^4 + \dotsb + b_1 b_2 \dotsb b_{i-1} a_i b_{i+1} |t|^{2i+2} + b_1 b_2 \dotsb b_{i+1} |t|^{2i+2} \\
            &&&- a_1 a_2 \dotsb a_i b_{i+1}|t|^2 - b_1 a_2 \dotsb a_i b_{i+1}|t|^4 - \dotsb - b_1 b_2 \dotsb b_{i-1} a_i b_{i+1} |t|^{2i+2} \\
            &=&&\;a_1 a_2 \dotsb a_{i+1} + b_1 a_2 \dotsb a_{i+1} |t|^2 + \dotsb + b_1 b_2 \dotsb b_i a_{i+1} |t|^{2i} + b_1 b_2 \dotsb b_{i+1} |t|^{2i+2}
        \end{alignat*}
        which is the formula for $A_{i+1}$, and we are done.
    \end{proof}
   
    With this knowledge, we are finally able to prove our theorem.
    \begin{theorem}
        $0 \in \essspec(\boxbt)$
    \end{theorem}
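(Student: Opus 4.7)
The plan is to exhibit a sequence of strictly positive eigenvalues of $\boxbt$ that tends to $0$, which forces $0$ to be a non-isolated point of $\spec(\boxbt)$ and hence to lie in $\essspec(\boxbt)$. For each $k \geq 1$, I will bound the smallest eigenvalue $\lambda_{\min,k}$ of $\boxbt$ restricted to the invariant subspace $W \subseteq \hks{2k-1}$ from Definition \ref{defoddvw}. By Theorem \ref{vwinvariant} every such $\lambda_{\min,k}$ is a genuine eigenvalue of $\boxbt$ on $L^2(\3)$, and it is strictly positive since $\boxbt$ is non-negative and since the relevant matrix will turn out to have nonzero determinant.

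The first step is to apply Theorem \ref{tridiagonalsymmetric} to pass from $m(\boxbt)|_W$ to its Hermitian similitude $A$; the hypothesis $u_j l_j = |t|^2 (2j+1)(2j)(2k-2j)(2k-1-2j) > 0$ is clear when $t \neq 0$. Then Theorem \ref{boundweigenvals} yields $\lambda_{\min,k} \leq h \cdot \det(A_k)/\det(A_{k-1})$. The key observation is that $a_k = (2k)(2k-2k) = 0$, so every term of the continuant formula of Theorem \ref{detoddwmatrix} for $\det(A_k)$ vanishes except the last, leaving $\det(A_k) = b_1 b_2 \cdots b_k |t|^{2k} = ((2k-1)!!)^2 |t|^{2k}$. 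In particular this is positive for $t \neq 0$, confirming that $A$ has full rank $k$ so Theorem \ref{boundweigenvals} applies. For the denominator I discard all but the first positive summand of the continuant expansion, getting $\det(A_{k-1}) \geq a_1 \cdots a_{k-1} = 4^{k-1}((k-1)!)^2$. Using $(2k-1)!! = (2k)!/(2^k k!)$ and the elementary estimate $\binom{2k}{k} \leq 4^k$, these combine to
\[
\lambda_{\min,k} \leq h \cdot \frac{((2k-1)!!)^2}{4^{k-1}((k-1)!)^2}|t|^{2k} = h \cdot \frac{k^2 \binom{2k}{k}^2}{4^{2k-1}} |t|^{2k} \leq 4hk^2 |t|^{2k},
\]
which tends to $0$ as $k \to \infty$ because $|t| < 1$.

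The main subtle point is choosing which single positive term of $\det(A_{k-1})$ to retain as a lower bound. Keeping the last term $b_1 \cdots b_{k-1}|t|^{2k-2}$ would only yield a bound of size $(2k-1)|t|^2$, which does not vanish with $k$. Keeping the first term $a_1 \cdots a_{k-1}$ instead is what exploits the extra $|t|^{2k}$ factor in the numerator coming from $a_k = 0$, and lets exponential decay defeat the polynomial blowup of the combinatorial factors; everything else reduces to bookkeeping with double factorials and central binomial coefficients. With the bound $\lambda_{\min,k} \leq 4hk^2|t|^{2k}$ in hand, $\{\lambda_{\min,k}\}_{k \geq 1}$ is a sequence of positive eigenvalues of $\boxbt$ accumulating at $0$, so $0 \in \essspec(\boxbt)$ as claimed.
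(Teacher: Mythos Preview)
Your proof is correct and follows essentially the same route as the paper: bound the smallest eigenvalue on $W\subset\hks{2k-1}$ by $h\det(A)/\det(A_{k-1})$ via Theorems~\ref{tridiagonalsymmetric} and~\ref{boundweigenvals}, use $a_k=0$ to collapse $\det(A)$ to $b_1\cdots b_k|t|^{2k}$, and keep only the term $a_1\cdots a_{k-1}$ in the denominator. The only difference is cosmetic: where the paper estimates the resulting product by $\prod(1+\tfrac{1}{2j})\le e^{\frac12\ln k}=\sqrt{k}$ to obtain the bound $h(2k-1)\sqrt{k}\,|t|^{2k}$, you rewrite the same quantity via $(2k-1)!!=(2k)!/(2^k k!)$ and invoke $\binom{2k}{k}\le 4^k$ to get $4hk^2|t|^{2k}$; both tend to $0$ for $|t|<1$.
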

    
    \begin{proof}
        By Theorem \ref{detoddwmatrix}, we have that on $W$ in $\hks{2k-1}$, $m(\boxbt)$ is similar to
        \[
    	    A = h \begin{pmatrix}
        		a_1 + b_1|t|^2 & c_1|t| &  &  & \phantom{\ddots} \\
    			c_1|t| & a_2 + b_2|t|^2 & c_2|t| &  & \phantom{\ddots} \\
    			& c_2|t| & a_3 + b_3|t|^2 & \ddots & \\
    		    &  & \ddots & \ddots & c_{k-1}|t| \\
    			\phantom{\ddots} &  &  & c_{k-1}|t| & a_k + b_k|t|^2 
    		\end{pmatrix}
        \]
        where $a_j = (2j)(2k-2j)$, $b_j = (2j-1)(2k+1-2j)$, and $c_j = \sqrt{(2j+1)(2j)(2k-2j)(2k-1-2j)}$. Now, by Theorem \ref{boundweigenvals} above, we know that
        \[
        	\lambda_{\min} \leq \frac{\det(A)}{\det(A_{k-1})}.
        \]
        Recall that $A_{k-1}$ denotes the submatrix formed by deleting the last row and column of the $k\times k$ matrix $A$. To show that $0 \in \essspec(\boxbt)$, we want to show that $\det(A)/\det(A_{k-1}) \to 0$ as $k \to \infty$. For this purpose we find an upper bound for $\det(A)/\det(A_{k-1})$ and show that this converges to $0$. Notice that Theorem \ref{detoddwmatrix} implies that,
        \begin{align}\label{eqn:ratioDets}
        \frac{\det(A)}{\det(A_{k-1})} &=h\frac{b_1 b_2  \dots b_{k-1} b_k |t|^{2k}}{a_1 a_2 \dots a_{k-1}+b_1 a_2 \dots a_{k-1}|t|^2+ b_1 b_2 \dots a_{k-1}|t|^4+\dots b_1 b_2 \dots b_{k-1}|t|^{2k-2} } \nonumber \\
         &\leq h\frac{b_1 b_2  \dots b_{k-1} b_k |t|^{2k}}{a_1 a_2 \dots a_{k-1}}. 
         \end{align}
        since, $a_j,b_j,|t|>0$. Now using the formulas for $a_j$ and $b_j$, notice that $($\ref{eqn:ratioDets}$)$ can be written as
        \[
            h(2k-1)|t|^{2k} \prod_{j=1}^{k-1} \frac{(2j+1)(2k-2j-1)}{(2j)(2k-2j)}.
        \]
        However, we know that for all $k$ and $1\leq j \leq k-1,$
        \[ 
            \frac{(2k-2j-1)}{(2k-2j)} <1, 
        \]
        and so,
        \[ 
            h(2k-1)|t|^{2k} \prod_{j=1}^{k-1} \frac{(2j+1)(2k-2j-1)}{(2j)(2k-2j)} \leq h(2k-1)|t|^{2k} \prod_{j=1}^{k-1} \frac{(2j+1)}{(2j)}=h(2k-1)|t|^{2k} \prod_{j=1}^{k-1} 1 + \frac{1}{2j}.
        \]
        Furthermore, we have
        \[
            h(2k-1)|t|^{2k} \prod_{j=1}^{k-1} 1 + \frac{1}{2j} \leq h(2k-1)|t|^{2k} \exp \left( \sum_{j=1}^{k-1} \frac{1}{2j} \right).
        \]
        Note that 
        \[
	        \sum_{j=1}^{k-1} \frac{1}{2j} \le \frac{1}{2} \ln k
        \]
        so our expression becomes
        \[
	    \frac{\det(A)}{\det(A_{k-1})}     \le h(2k-1)|t|^{2k} \exp \left( \frac{1}{2} \ln k \right) = h(2k-1)\sqrt{k}\,|t|^{2k}
        \]
        and our problem reduces to showing that $\lim\limits_{k\to \infty} h(2k-1)\sqrt{k}|t|^{2k} = 0$. We note that $h$ is a constant and $|t|<1$; therefore, by L'Hospital's rule the last expression indeed goes to $0$.

        Finally, we have,
        \[
        0\leq \lim_{k\to\infty} \lambda_{\min} \leq \lim_{k\to\infty} \frac{\det(A)}{\det(A_{k-1})} \leq \lim_{k\to\infty} h(2k-1)\sqrt{k}\,|t|^{2k} =0,
        \]
        and so $\lambda_{\min}\to0$. Hence $0\in \essspec(\boxbt).$
    \end{proof}

We note that by the discussion in the introduction, this means that the CR-manifold $(\l_t,\3)$ is not embeddable into any $\C^N$.

    \section*{Acknowledgements}
	This research was conducted at the NSF REU Site (DMS-1659203) in Mathematical Analysis and Applications at the University of Michigan-Dearborn. We would like to thank the National Science Foundation, the College of Arts, Sciences, and Letters, the Department of Mathematics and Statistics at the University of Michigan-Dearborn, and Al Turfe for their support. We would also like to thank John Clifford, Hyejin Kim, and the other participants of the REU program for fruitful conversations on this topic.

\end{document}